\newtheorem{theorem}{Theorem}
\newtheorem{lemma}{Lemma}
\newtheorem{remark}{Remark}
\newtheorem{example}{Example}
\newtheorem{proposition}{Proposition}
\newtheorem{definition}{Definition}
\numberwithin{equation}{section}
\begin{document}
\title{Compact Finite Difference Approximations for Space Fractional Diffusion Equations}
\author{Han Zhou,~~WenYi Tian,~~Weihua Deng\footnote{Corresponding Author. E-mail: dengwh@lzu.edu.cn}\\[10pt]
    \itshape{School of Mathematics and Statistics, Lanzhou University,
             Lanzhou 730000, P R China}
    }
\date{}
\maketitle
\begin{abstract}
Based on the weighted and shifted Gr\"{u}nwald difference (WSGD)
operators \cite{Tian:12}, we further construct the compact finite
difference discretizations for the fractional operators. Then the
discretization schemes are used to approximate the one and two
dimensional space fractional diffusion equations. The detailed
numerical stability and error analysis are theoretically performed.
We theoretically prove and numerically verify that the provided
numerical schemes have the convergent orders $3$ in space and $2$ in
time.

\vskip 0.1cm \textbf{Keywords}: Compact difference approximation,
Fractional operator, Stability and convergence, Space fractional
diffusion equation.

\vskip 0.1cm \textbf{AMS subject classifications}: 65M06, 26A33,
65M12, 35R11.
\end{abstract}
\section{Introduction}
Diffusion is a fundamental phenomena in real world. The classical
diffusion equation can be derived from the conservation law of
particles, when assuming the particles' diffusion satisfies the
Fick's law. Based on the CTRW model of statistical physics, it can
also be easily derived. Nowadays, anomalous diffusion is widely
recognized in scientific community, its basic feature is that the
classical Fick's law doesn't not hold again. In fact, anomalous
diffusion is usually characterized by the mean square displacement
of the particles, given by $\langle  x^2(t) \rangle - \langle x(t)
\rangle^2 \sim t^\alpha$. The exponent $\alpha$ classifies the type
of diffusion: for $\alpha=1$, we have normal diffusion; for
$0<\alpha<1$, subdiffusion; and $\alpha>1$, superdiffusion. The
anomalous diffusion equations can still be derived from two ways:
using the conservation law and fractional Fick's law; and from the
CTRW model with power law waiting time and/or power law jump length
distribution. Fractional calculus plays an important role in
obtaining the anomalous diffusion equations \cite{Barkai:02,
Benson:00, Chaves:98, Gorenflo:98, Metzler:00, Metzler:04,
Scalas:00, Zaslavsky:02}.

The equation describing the superdiffusion is the space fractional
diffusion equation, and its concrete form is to replace the second
order derivative of the classical diffusion equation by the
Riemann-Liouville fractional derivative of order $\alpha$ and
$1<\alpha<2$ \cite{Metzler:00, Metzler:04}. This paper concerns the
high accurate numerical methods for the space fractional diffusion
equation. Meerschaert and his collaborators first introduce the
shifted Gr\"{u}nwald formula and successfully get the stable finite
difference scheme for numerically solving the space fractional
equation based on this formula \cite{Meerschaert:04}. Later more
research works are appeared by using the shifted Gr\"{u}nwald
formula to discretize the space fractional derivatives  \cite{Li:11,
Meerschaert:04, Meerschaert:06, Meerschaert:06b} with first order
accuracy in space, and possibly obtaining second order accuracy
after extrapolation \cite{Tadjeran:06, Tadjeran:07}. In
\cite{Tian:12}, the weighted and shifted Gr\"{u}nwald difference
(WSGD) operators are introduced to discretize the fractional
operators with second or higher order accuracy, then the second
order finite difference schemes are established to numerically solve
the space fractional diffusion equation. In \cite{Tian:12}, it is
also verified that the 3-WSGD operator can approximate the
Riemann-Liouville fractional derivative with third order accuracy,
but the 3-WSGD operator fails to numerically solve the time
dependent space fractional diffusion equations with unconditional
stability.

As the sequel to \cite{Tian:12}, based on the WSGD operators we
 focus on constructing the compact finite difference
discretizations, termed compact WSGD operators (CWSGD), for the
fractional operators with third order accuracy. When the order of
fractional derivative $\alpha$ equals to $1$ or $2$, it becomes the
compact difference operators for the first or second order spatial
derivatives with fourth order accuracy, which have been widely used
in numerically solving the linear and nonlinear, steady and
evolution equations to achieve the high order accuracy
\cite{Liao:08, Qin:11, Sengupta:04, Tian:07}. By using the CWSGD
operator, we get the third order numerical schemes for the space
fractional diffusion equation. We theoretically make the numerical
stability and error analysis, and the detailedly numerical
experiments confirm the theoretical results. In performing the
theoretical analysis, the negative definite property of the matrix
generated by the WSGD discretization to the fractional diffusion
operator \cite{Tian:12} still plays a key role.

This paper is organized as follows. In Sec. 2, we introduce the
CWSGD operators. Based on the CWSGD operators, in Sec. 3 and 4, we
build the compact finite difference schemes for the one and two
dimensional space fractional diffusion equations. The stability and
third order convergence with respect to the discrete $L^2$ norm of
the difference schemes are proven. In Sec. 5, the extensive
numerical experiments are performed to verify the accuracy and
theoretical convergent order. And some concluding remarks are made
in the last section.
\section{Compact WSGD Operators for the Riemann-Liouville
Fractional Derivatives} Now from the WSGD operator and the Taylor's
expansions of the shifted Gr\"unwald finite difference formulae,  we
derive the CWSGD operators for the Riemann-Liouville derivatives.
First let us introduce the definition of the Riemann-Liouville
derivatives.

\begin{definition}[\cite{Podlubny:99}]\label{def:1}
The $\alpha\,(n-1<\alpha<n)$ order left and right Riemann-Liouville
fractional derivatives of the function $u(x)$ on $(a,b)$ are,
respectively, defined as
\begin{itemize}
  \item[(1)] left Riemann-Liouville fractional derivative:
    \begin{equation*}
      _aD_x^{\alpha}u(x)=\frac{1}{\Gamma(n-\alpha)}\frac{\mathrm{d}^n}{\mathrm{d}
          x^n}\int_a^x\frac{u(\xi)}{(x-\xi)^{\alpha-n+1}}\mathrm{d}\xi;
    \end{equation*}
  \item[(2)] right Riemann-Liouville fractional derivative:
    \begin{equation*}
      _xD_b^{\alpha}u(x)=\frac{(-1)^n}{\Gamma(n-\alpha)}\frac{\mathrm{d}^n}{\mathrm{d}
          x^n}\int_x^b\frac{u(\xi)}{(\xi-x)^{\alpha-n+1}}\mathrm{d}\xi;
    \end{equation*}
\end{itemize}
 ~~ if $\alpha=n$, then $_aD_x^{\alpha}u(x)=\frac{\mathrm{d}^n}{\mathrm{d}x^n}u(x)$ and $_xD_b^{\alpha}u(x)=(-1)^n\frac{\mathrm{d}^n}{\mathrm{d}x^n}u(x)$.
\end{definition}
The second order finite difference approximations (the WSGD
operators) for Riemann-Liouville fractional derivatives are given as
follows.
\begin{lemma}[\cite{Tian:12}]
\label{lem:1} Let $u\in L^1(\mathbb{R})$,
$_{-\infty}D_x^{\alpha+2}u$, $_{x}D_{\infty}^{\alpha+2}u$ and their
Fourier transformations belong to $L^1(\mathbb{R})$, and define the
weighted and shifted Gr\"unwald difference (WSGD) operators by
 \begin{subequations}
 \begin{align}
    &{_L}\mathcal{D}_{h,p,q}^{\alpha}u(x)=\frac{1}{h^{\alpha}}\sum_{k=0}^{\infty}g_k^{(\alpha)}\Big(\frac{\alpha-2q}{2(p-q)}u(x-(k-p)h)+\frac{2p-\alpha}{2(p-q)}u(x-(k-q)h)\Big),\label{eq:2.1a} \\
    &{_R}\mathcal{D}_{h,p,q}^{\alpha}u(x)=\frac{1}{h^{\alpha}}\sum_{k=0}^{\infty}g_k^{(\alpha)}\Big(\frac{\alpha-2q}{2(p-q)}u(x+(k-p)h)+\frac{2p-\alpha}{2(p-q)}u(x+(k-q)h)\Big),\label{eq:2.1b}
  \end{align}
  \end{subequations}
  where $g_k^{(\alpha)}=(-1)^k\binom{\alpha}{k}$ are the coefficients of the power series of the function $(1-z)^{\alpha}$, and $p,q$ are integers, $p\neq q$.
  Then we have
  \begin{subequations}
  \begin{align}
    &{_L}\mathcal{D}_{h,p,q}^{\alpha}u(x)={_{-\infty}}D_x^{\alpha}u(x)+O(h^2),   \\
    &{_R}\mathcal{D}_{h,p,q}^{\alpha}u(x)={_x}D_{\infty}^{\alpha}u(x)+O(h^2),
  \end{align}
  \end{subequations}
  uniformly for $x\in\mathbb{R}$.
\end{lemma}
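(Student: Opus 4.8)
\emph{Proof idea.} The plan is to move everything to the Fourier side, where these Gr\"unwald-type difference operators become Fourier multipliers and the whole statement reduces to one (delicate) estimate on a scalar symbol. The first observation I would make is that the WSGD operator is a two-term weighted combination of ordinary shifted Gr\"unwald operators: writing $\delta_{h,p}^{\alpha}u(x)=h^{-\alpha}\sum_{k\ge0}g_k^{(\alpha)}u(x-(k-p)h)$, one has ${_L}\mathcal{D}_{h,p,q}^{\alpha}u=\lambda\,\delta_{h,p}^{\alpha}u+(1-\lambda)\,\delta_{h,q}^{\alpha}u$ with $\lambda=\frac{\alpha-2q}{2(p-q)}$, $1-\lambda=\frac{2p-\alpha}{2(p-q)}$, and similarly ${_R}\mathcal{D}_{h,p,q}^{\alpha}$ decomposes into backward shifts; so it suffices to understand the single operator $\delta_{h,p}^{\alpha}$.

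Next I would compute its symbol. Since the $g_k^{(\alpha)}$ are the Taylor coefficients of $(1-z)^{\alpha}$ and are absolutely summable, $\delta_{h,p}^{\alpha}$ is a bounded convolution operator with
\begin{equation*}
  \widehat{\delta_{h,p}^{\alpha}u}(\omega)=\frac{(1-e^{-i\omega h})^{\alpha}}{h^{\alpha}}\,e^{ip\omega h}\,\hat u(\omega)=(i\omega)^{\alpha}\,W_p(i\omega h)\,\hat u(\omega),\qquad W_p(z):=\Big(\frac{1-e^{-z}}{z}\Big)^{\alpha}e^{pz},
\end{equation*}
and, because $(i\omega)^{\alpha}$ is precisely the Fourier symbol of $_{-\infty}D_x^{\alpha}$, the error symbol of ${_L}\mathcal{D}_{h,p,q}^{\alpha}u-{_{-\infty}}D_x^{\alpha}u$ equals $(i\omega)^{\alpha}\big(\lambda W_p(i\omega h)+(1-\lambda)W_q(i\omega h)-1\big)$. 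From $\frac{1-e^{-z}}{z}=1-\frac z2+\frac{z^2}{6}-\cdots$ and $e^{pz}=1+pz+\cdots$ one gets $W_p(z)=1+\big(p-\frac{\alpha}{2}\big)z+O(z^2)$, so a single shift is only first order unless $p=\alpha/2$; the weights $\lambda,1-\lambda$ are exactly the unique pair annihilating the linear term, i.e. the solution of $\lambda(p-\tfrac\alpha2)+(1-\lambda)(q-\tfrac\alpha2)=0$. Hence $\lambda W_p(z)+(1-\lambda)W_q(z)=1+z^2\Psi(z)$ with $\Psi$ analytic near $z=0$, and the error symbol becomes $h^2(i\omega)^{\alpha+2}\Psi(i\omega h)$.

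Then I would close the argument by Fourier inversion:
\begin{equation*}
  {_L}\mathcal{D}_{h,p,q}^{\alpha}u(x)-{_{-\infty}}D_x^{\alpha}u(x)=\frac{h^2}{2\pi}\int_{-\infty}^{\infty}e^{i\omega x}\,(i\omega)^{\alpha+2}\Psi(i\omega h)\,\hat u(\omega)\,\mathrm d\omega,
\end{equation*}
whence $\big|{_L}\mathcal{D}_{h,p,q}^{\alpha}u(x)-{_{-\infty}}D_x^{\alpha}u(x)\big|\le\frac{h^2}{2\pi}\big(\sup_{\nu\in\mathbb R}|\Psi(i\nu)|\big)\big\|(i\omega)^{\alpha+2}\hat u\big\|_{L^1}$. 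By hypothesis $(i\omega)^{\alpha+2}\hat u(\omega)=\widehat{{_{-\infty}}D_x^{\alpha+2}u}(\omega)\in L^1(\mathbb R)$, so the bound is $O(h^2)$ uniformly in $x$; the right-sided estimate follows from the reflection $x\mapsto-x$, which swaps $_{-\infty}D_x^{\alpha}$ with $_xD_{\infty}^{\alpha}$ and forward with backward shifts.

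The hard part will be the \emph{uniform} bound $\sup_{\nu\in\mathbb R}|\Psi(i\nu)|<\infty$: near $\nu=0$ it is just Taylor's theorem, but for $|\nu|\to\infty$ one has to use that $\big|\frac{1-e^{-i\nu}}{i\nu}\big|^{\alpha}$ behaves like $|\nu|^{-\alpha}$ while $e^{ip\nu},e^{iq\nu}$ remain bounded, so $\lambda W_p(i\nu)+(1-\lambda)W_q(i\nu)\to0$ and hence $\Psi(i\nu)\to0$; together with continuity on compact $\nu$-intervals — here one notes $\frac{1-e^{-i\nu}}{i\nu}$ never lies on the negative real axis, so the principal branch of its $\alpha$-th power is continuous — this yields a finite global bound. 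The remaining points are routine Fourier bookkeeping: interchanging the Gr\"unwald series with the Fourier integral (valid since $\sum_k|g_k^{(\alpha)}|<\infty$ and $u\in L^1$) and justifying the identity $\widehat{{_{-\infty}}D_x^{\alpha+2}u}(\omega)=(i\omega)^{\alpha+2}\hat u(\omega)$ and the inversion formula under the stated $L^1$ hypotheses.
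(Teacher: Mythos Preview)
The paper does not supply its own proof of this lemma: it is quoted verbatim from \cite{Tian:12} and used as a black box, so there is nothing in the present paper to compare your argument against. That said, your Fourier-multiplier approach is exactly the method by which this result is established in \cite{Tian:12} (and, for the single-shift case, in \cite{Tadjeran:06}): compute the symbol of each shifted Gr\"unwald operator as $(i\omega)^{\alpha}W_p(i\omega h)$, observe that the weights $\lambda,1-\lambda$ are chosen to kill the $O(z)$ term in the combined symbol, and then control the remainder by the $L^1$ assumption on $\widehat{{_{-\infty}}D_x^{\alpha+2}u}$. Your identification of the uniform bound on $\Psi(i\nu)$ as the only nontrivial step is accurate, and your sketch of why it holds (Taylor near $0$, decay $|\nu|^{-\alpha}$ at infinity, continuity in between via the principal branch) is the right outline; the argument is correct.
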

From \cite{Tadjeran:06}, we know that the Taylor's expansions of the
shifted Gr\"unwald finite difference formulae, which is necessary
for establishing the CWSGD operator for Riemann-Liouville fractional
derivatives.
\begin{lemma}[\cite{Tadjeran:06}]\label{lem:2}
  Assuming that $u\in C^{n+3}(\mathbb{R})$ such that all derivatives of $u$ up
  to order $n+3$ belong to $L^1(\mathbb{R})$,  then for any nonnegative integer $p$, we can obtain that
  \begin{subequations}
    \begin{align}
      & \frac{1}{h^{\alpha}}\sum_{k=0}^{\infty}g_k^{(\alpha)}u(x-(k-p)h)
        ={_{-\infty}}D_x^{\alpha}u(x)+\sum_{l=1}^{n-1}
        \Big(a^{\alpha}_{p,l}~{_{-\infty}}D_x^{\alpha+l}u(x)\Big)h^l+O(h^n),\label{eq:2.3a}       \\
      & \frac{1}{h^{\alpha}}\sum_{k=0}^{\infty}g_k^{(\alpha)}u(x+(k-p)h)
        ={_x}D_{\infty}^{\alpha}u(x)+\sum_{l=1}^{n-1}
        \Big(a^{\alpha}_{p,l}~{_x}D_{\infty}^{\alpha+l}u(x)\Big)h^l+O(h^n)\label{eq:2.3b},
    \end{align}
  \end{subequations}
  uniformly for $x\in \mathbb{R}$, where $a^{\alpha}_{p, l}$ are the coefficients of the power  series expansion of function $w_{\alpha, p}(z)=\Big(\frac{1-e^{-z}}{z}\Big)^\alpha e^{pz}$, and $
  w_{\alpha, p}(z)=\sum\limits_{k=0}^{\infty}a^{\alpha}_{p,k}z^k=1+(p-\frac{\alpha}{2})z+\frac{1}{24} (\alpha + 3 \alpha^2 - 12 \alpha p + 12 p^2)z^2+O(z^3)$.
\end{lemma}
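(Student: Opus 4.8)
\emph{Proof sketch.} I would follow the Fourier-transform (``super-convergence'') argument of \cite{Tadjeran:06}. Abbreviate the shifted Gr\"unwald operator by $A_{h,p}^{\alpha}u(x)=\frac{1}{h^{\alpha}}\sum_{k=0}^{\infty}g_k^{(\alpha)}u(x-(k-p)h)$, write $\widehat u(\omega)=\int_{\mathbb{R}}e^{-i\omega x}u(x)\,\mathrm{d}x$, and recall that the left Riemann--Liouville derivative acts as a Fourier multiplier, $\widehat{{_{-\infty}}D_x^{\beta}u}(\omega)=(i\omega)^{\beta}\widehat u(\omega)$ (principal branch) for every $\beta>0$. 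Using the translation rule $\widehat{u(\cdot-(k-p)h)}(\omega)=e^{i\omega p h}e^{-i\omega k h}\widehat u(\omega)$ together with the generating identity $\sum_{k=0}^{\infty}g_k^{(\alpha)}z^{k}=(1-z)^{\alpha}$ (valid for $|z|\le 1$ since $\sum_k|g_k^{(\alpha)}|<\infty$ when $\alpha>0$) at $z=e^{-i\omega h}$, the first step is to read off the symbol
\begin{equation*}
  \widehat{A_{h,p}^{\alpha}u}(\omega)
  =\frac{e^{i\omega p h}\,(1-e^{-i\omega h})^{\alpha}}{h^{\alpha}}\,\widehat u(\omega)
  =(i\omega)^{\alpha}\,w_{\alpha,p}(i\omega h)\,\widehat u(\omega),
  \qquad w_{\alpha,p}(z)=\Big(\frac{1-e^{-z}}{z}\Big)^{\alpha}e^{pz},
\end{equation*}
so that the multiplier of $A_{h,p}^{\alpha}$ differs from that of ${_{-\infty}}D_x^{\alpha}$ exactly by the function $w_{\alpha,p}$ whose Maclaurin coefficients define the $a^{\alpha}_{p,k}$.

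The second step is to expand $w_{\alpha,p}$. Since $(1-e^{-z})/z$ has a removable singularity at $z=0$ with value $1\neq 0$, $w_{\alpha,p}$ is analytic near the origin, so $w_{\alpha,p}(z)=\sum_{k=0}^{n-1}a^{\alpha}_{p,k}z^{k}+\rho_n(z)$ with $|\rho_n(z)|\le C_1|z|^{n}$ for $|z|\le 1$. What the inverse transform really needs, though, is a bound of this type for \emph{all} real $\omega$, i.e. along the whole imaginary axis $z=i\theta$; there I would use that $\big|(1-e^{-i\theta})/(i\theta)\big|=\big|2\sin(\theta/2)/\theta\big|\le 1$ and $|e^{ip\theta}|=1$, whence $|w_{\alpha,p}(i\theta)|\le 1$ uniformly, so for $|\theta|\ge 1$ one simply has $|\rho_n(i\theta)|\le 1+\big(\sum_{k<n}|a^{\alpha}_{p,k}|\big)|\theta|^{n-1}\le C_2|\theta|^{n}$. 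Hence $|\rho_n(i\theta)|\le C|\theta|^{n}$ for every $\theta\in\mathbb{R}$.

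The third step substitutes the expansion back. Using $a^{\alpha}_{p,0}=w_{\alpha,p}(0)=1$ and $(i\omega)^{\alpha}(i\omega h)^{k}=h^{k}(i\omega)^{\alpha+k}=h^{k}\,\widehat{{_{-\infty}}D_x^{\alpha+k}u}(\omega)$, the symbol identity becomes
\begin{equation*}
  \widehat{A_{h,p}^{\alpha}u}(\omega)
  =\widehat{{_{-\infty}}D_x^{\alpha}u}(\omega)
  +\sum_{l=1}^{n-1}a^{\alpha}_{p,l}\,h^{l}\,\widehat{{_{-\infty}}D_x^{\alpha+l}u}(\omega)
  +(i\omega)^{\alpha}\rho_n(i\omega h)\,\widehat u(\omega).
\end{equation*}
The hypotheses ($u\in C^{n+3}$, all derivatives up to order $n+3$ in $L^1$) give $|\omega|^{j}|\widehat u(\omega)|\le\|u^{(j)}\|_{L^1}$ for $j\le n+3$, hence $|\widehat u(\omega)|\le C\min\{1,|\omega|^{-(n+3)}\}$, so each term above lies in $L^1(\mathbb{R})$ (for the $l$-th term because $\alpha+l<n+2$, and for the remainder because $\alpha+n-(n+3)=\alpha-3<-1$). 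Inverting the Fourier transform term by term then transfers the identity to the $x$-variable, and the remainder is controlled by
\begin{equation*}
  \Big|\frac{1}{2\pi}\int_{\mathbb{R}}e^{i\omega x}(i\omega)^{\alpha}\rho_n(i\omega h)\widehat u(\omega)\,\mathrm{d}\omega\Big|
  \le\frac{C\,h^{n}}{2\pi}\int_{\mathbb{R}}|\omega|^{\alpha+n}|\widehat u(\omega)|\,\mathrm{d}\omega
  \le C'\,h^{n}
\end{equation*}
uniformly in $x$. This proves \eqref{eq:2.3a}, and \eqref{eq:2.3b} follows verbatim after replacing $\omega$ by $-\omega$ (equivalently, using $\widehat{{_x}D_{\infty}^{\beta}u}(\omega)=(-i\omega)^{\beta}\widehat u(\omega)$).

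The step I expect to be the main obstacle is the uniform remainder bound for $w_{\alpha,p}$ on the entire imaginary axis: Taylor's theorem only yields the estimate near $z=0$, so one must separately exploit the uniform boundedness $|w_{\alpha,p}(i\theta)|\le 1$ for $|\theta|$ large and then check that $|\omega|^{\alpha+n}\widehat u(\omega)$ is still integrable — which is precisely where the regularity index $n+3$ in the hypothesis gets used. Everything else reduces to bookkeeping with the power series of $w_{\alpha,p}$ and routine Fourier-inversion estimates.
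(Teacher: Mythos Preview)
The paper does not supply its own proof of this lemma; it is quoted directly from \cite{Tadjeran:06}. Your Fourier-multiplier argument is exactly the approach used there, and the details you outline (the symbol identity $\widehat{A_{h,p}^{\alpha}u}(\omega)=(i\omega)^{\alpha}w_{\alpha,p}(i\omega h)\widehat u(\omega)$, the analyticity of $w_{\alpha,p}$ at the origin, the uniform bound $|w_{\alpha,p}(i\theta)|\le 1$ on the imaginary axis, and the decay $|\widehat u(\omega)|\le C|\omega|^{-(n+3)}$ to make the remainder integrable) are the correct ingredients, so your proposal is both sound and faithful to the cited source.
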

For any function $u(x)$, denoting by $\delta_x^2$ the second order
central difference operator, that is $\delta_x^2
u(x)=\big(u(x-h)-2u(x)+u(x+h)\big)/h^2$, we introduce the following
finite difference operator
\begin{equation}\label{eq:2.5}
  \mathcal{C}_x u=(1+c_{p,q,2}^{\alpha} h^2\delta_x^2) u,
\end{equation}
where
$c_{p,q,2}^{\alpha}=\frac{\alpha-2q}{2(p-q)}a_{p,2}^{\alpha}+\frac{2p-\alpha}{2(p-q)}a_{q,2}^{\alpha}$.
We call $\mathcal{C}_x$ the CWSGD operator of Riemann-Liouville
fractional derivatives, of which the detailed construction is
described in the following proposition.
\begin{proposition}\label{prop:1}
  Under the conditions of Lemmas \ref{lem:1} and \ref{lem:2}, there
  exist
  \begin{equation}
     \begin{split}\label{eq:2.11}
       & {_L}\mathcal{D}_{h,p,q}^{\alpha}u(x)
         =\mathcal{C}_x~\Big({_{-\infty}}D_x^{\alpha}u(x)\Big)
         +c_{p,q,3}^{\alpha}~{_{-\infty}}D_x^{\alpha+3}u(x)h^3+O(h^4),\\
       & {_R}\mathcal{D}_{h,p,q}^{\alpha}u(x)
         =\mathcal{C}_x~\Big({_x}D_{\infty}^{\alpha}u(x)\Big)
         +c_{p,q,3}^{\alpha}~{_x}D_{\infty}^{\alpha+3}u(x)h^3+O(h^4),
     \end{split}
  \end{equation}
  uniformly for $x\in\mathbb{R}$, where $p,q$ are nonnegative integers and $p\neq q$. The operator $\mathcal{C}_x$ is defined in \ref{eq:2.5}.
\end{proposition}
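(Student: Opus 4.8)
The plan is to carry the WSGD expansion of Lemma~\ref{lem:1} one order further and then check that the resulting $O(h^2)$ term is exactly what the compact operator $\mathcal{C}_x$ reproduces, so that after cancellation only the $h^3$ term survives. I will do everything for the left-sided case; the right-sided case is identical with \ref{eq:2.3b} in place of \ref{eq:2.3a}.

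First I would apply Lemma~\ref{lem:2} with $n=4$ to each of the two shifted Gr\"unwald quotients in \ref{eq:2.1a}, once with shift $p$ and once with shift $q$, to get
\begin{equation*}
  \frac{1}{h^{\alpha}}\sum_{k=0}^{\infty}g_k^{(\alpha)}u(x-(k-r)h)
   ={_{-\infty}}D_x^{\alpha}u(x)+\sum_{l=1}^{3}a^{\alpha}_{r,l}\,{_{-\infty}}D_x^{\alpha+l}u(x)\,h^l+O(h^4),\quad r\in\{p,q\},
\end{equation*}
both uniformly in $x\in\mathbb{R}$. Forming the weighted combination that defines ${_L}\mathcal{D}_{h,p,q}^{\alpha}u$, with weights $\frac{\alpha-2q}{2(p-q)}$ and $\frac{2p-\alpha}{2(p-q)}$ (which sum to $1$), the $h^0$ term is ${_{-\infty}}D_x^{\alpha}u(x)$; the $h^1$ coefficient is $\frac{\alpha-2q}{2(p-q)}a^{\alpha}_{p,1}+\frac{2p-\alpha}{2(p-q)}a^{\alpha}_{q,1}$, and substituting $a^{\alpha}_{r,1}=r-\tfrac{\alpha}{2}$ from Lemma~\ref{lem:2} makes it vanish after a one-line computation (this is nothing but the second-order consistency already asserted in Lemma~\ref{lem:1}); the $h^2$ and $h^3$ coefficients are, by definition, $c_{p,q,2}^{\alpha}$ and $c_{p,q,3}^{\alpha}:=\frac{\alpha-2q}{2(p-q)}a^{\alpha}_{p,3}+\frac{2p-\alpha}{2(p-q)}a^{\alpha}_{q,3}$. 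Hence
\begin{equation*}
  {_L}\mathcal{D}_{h,p,q}^{\alpha}u(x)
   ={_{-\infty}}D_x^{\alpha}u(x)+c_{p,q,2}^{\alpha}\,{_{-\infty}}D_x^{\alpha+2}u(x)\,h^2
    +c_{p,q,3}^{\alpha}\,{_{-\infty}}D_x^{\alpha+3}u(x)\,h^3+O(h^4).
\end{equation*}

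Next I would expand the right-hand side of the claim. By the Taylor expansion of the central second difference, $\delta_x^2 v(x)=v''(x)+O(h^2)$, applied to $v={_{-\infty}}D_x^{\alpha}u$, together with the composition rule $\frac{\mathrm d^2}{\mathrm dx^2}\,{_{-\infty}}D_x^{\alpha}u={_{-\infty}}D_x^{\alpha+2}u$ (valid on $\mathbb{R}$; on the Fourier side it is just $(i\omega)^2(i\omega)^{\alpha}=(i\omega)^{\alpha+2}$, which is covered by the hypotheses of Lemma~\ref{lem:1}), I obtain $\mathcal{C}_x\big({_{-\infty}}D_x^{\alpha}u(x)\big)=(1+c_{p,q,2}^{\alpha}h^2\delta_x^2){_{-\infty}}D_x^{\alpha}u(x)={_{-\infty}}D_x^{\alpha}u(x)+c_{p,q,2}^{\alpha}\,{_{-\infty}}D_x^{\alpha+2}u(x)\,h^2+O(h^4)$. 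Subtracting this from the expansion of ${_L}\mathcal{D}_{h,p,q}^{\alpha}u$ leaves exactly $c_{p,q,3}^{\alpha}\,{_{-\infty}}D_x^{\alpha+3}u(x)\,h^3+O(h^4)$, which is the first identity in \ref{eq:2.11}; the second identity follows in exactly the same way using \ref{eq:2.3b} and $\frac{\mathrm d^2}{\mathrm dx^2}\,{_x}D_{\infty}^{\alpha}u={_x}D_{\infty}^{\alpha+2}u$.

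I expect the only non-trivial point to be the bookkeeping of the remainders and their uniformity in $x$: applying Lemma~\ref{lem:2} with $n=4$, and turning $\delta_x^2({_{-\infty}}D_x^{\alpha}u)-\frac{\mathrm d^2}{\mathrm dx^2}{_{-\infty}}D_x^{\alpha}u$ into a uniform $O(h^2)$, both require enough smoothness and integrability of $u$ (essentially the conditions of Lemmas~\ref{lem:1} and~\ref{lem:2} with $n$ taken large enough, so that ${_{-\infty}}D_x^{\alpha+4}u$ and its transform are controlled). The algebra itself — the cancellation of the $h^1$ term and the identification of the $h^2$ coefficient with $c_{p,q,2}^{\alpha}$ — is immediate from the explicit formula for $a^{\alpha}_{r,1}$ and the definition of $c_{p,q,2}^{\alpha}$ in \ref{eq:2.5}.
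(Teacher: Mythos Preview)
Your argument is correct and is essentially the paper's own proof: expand the WSGD operator via Lemma~\ref{lem:2} with $n=4$ so that the $h^1$ term cancels and the $h^2$, $h^3$ coefficients are $c_{p,q,2}^{\alpha}$, $c_{p,q,3}^{\alpha}$, then use $\delta_x^2 v=v''+O(h^2)$ together with $\frac{\mathrm d^2}{\mathrm dx^2}\,{_{-\infty}}D_x^{\alpha}u={_{-\infty}}D_x^{\alpha+2}u$ to identify the $h^2$ term with $\mathcal{C}_x({_{-\infty}}D_x^{\alpha}u)-{_{-\infty}}D_x^{\alpha}u$ up to $O(h^4)$. The paper compresses the first step into a single substitution and writes the intermediate expansion directly in the form $(1+c_{p,q,2}^{\alpha}h^2\frac{\mathrm d^2}{\mathrm dx^2})({_{-\infty}}D_x^{\alpha}u)$, but the content is the same.
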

\begin{proof}
  Substituting formulae \ref{eq:2.3a} and \ref{eq:2.3b} into \ref{eq:2.1a} and \ref{eq:2.1b}, respectively, and taking $n=4$, we arrive at
  \begin{equation}\label{eq:2.4}
    \begin{split}
      & {_L}\mathcal{D}_{h,p,q}^{\alpha}u(x)=\Big(1+c_{p,q,2}^{\alpha} h^2
        \frac{\mathrm{d}^2}{ \mathrm{d}x^2}\Big)\big({_{-\infty}}D_x^{\alpha}u(x)\big)
        +c_{p,q,3}^{\alpha}~{_{-\infty}}D_x^{\alpha+3}u(x)h^3+O(h^4), \\
      & {_R}\mathcal{D}_{h,p,q}^{\alpha}u(x)=\Big(1+c_{p,q,2}^{\alpha} h^2
        \frac{\mathrm{d}^2}{\mathrm{d}x^2}\Big)\big({_x}D_{\infty}^{\alpha}u(x)\big)
        +c_{p,q,3}^{\alpha}~{_x}D_{\infty}^{\alpha+3}u(x)h^3+O(h^4).
    \end{split}
  \end{equation}
  Since $ \delta_x^2 u=\frac{\mathrm{d}^2}{\mathrm{d} x^2} u+O(h^2)$, it yields
  \begin{equation}\label{eq:2.7}
    \mathcal{C}_x u=(1+c_{p,q,2}^{\alpha} h^2\frac{\mathrm{d}^2}{ \mathrm{d}x^2})u+O(h^4).
  \end{equation}
  Then we obtain the needed results by substituting $\ref{eq:2.7}$ into $\ref{eq:2.4}$.
\end{proof}
\begin{remark}
  If the function $u(x)$ is defined on the bounded interval $[a, b]$ with boundary condition $u(a)=0$ or $u(b)=0$, then the WSGD formulae approximating the $\alpha$ order left and right Riemann-Liouville fractional derivatives of $u(x)$ at each point x are written as
  \begin{equation}\label{eq:2.2}
    \begin{split}
      & {_L}\mathcal{D}_{h,p,q}^{\alpha}u(x)=\frac{\mu_1}{h^\alpha}
        \sum_{k=0}^{[\frac{x-a}{h}]+p}g_k^{(\alpha)}u(x-(k-p)h)
        +\frac{\mu_2}{h^\alpha}\sum_{k=0}^{[\frac{x-a}{h}]+q}g_k^{(\alpha)}u(x-(k-q)h), \\
      & {_R}\mathcal{D}_{h,p,q}^{\alpha}u(x)=\frac{\mu_1}{h^\alpha}
        \sum_{k=0}^{[\frac{b-x}{h}]+p}g_k^{(\alpha)}u(x+(k-p)h)
        +\frac{\mu_2}{h^\alpha}\sum_{k=0}^{[\frac{b-x}{h}]+q}g_k^{(\alpha)}u(x+(k-q)h),
    \end{split}
  \end{equation}
  where
  $\mu_1=\frac{\alpha-2q}{2(p-q)},~\mu_2=\frac{2p-\alpha}{2(p-q)}$.  After applying Proposition \ref{prop:1}, we have
  \begin{equation}
    \begin{split}\label{eq:2.12}
      & {_L}\mathcal{D}_{h,p,q}^{\alpha}u(x)
        =\mathcal{C}_x~\Big({_{a}}D_x^{\alpha}u(x)\Big)
        +c_{p,q,3}^{\alpha}~{_a}D_x^{\alpha+3}u(x)h^3+O(h^4), \\
      & {_R}\mathcal{D}_{h,p,q}^{\alpha}u(x)
        =\mathcal{C}_x~\Big({_x}D_{b}^{\alpha}u(x)\Big)
        +c_{p,q,3}^{\alpha}~{_x}D_{b}^{\alpha+3}u(x)h^3+O(h^4).
    \end{split}
  \end{equation}
\end{remark}

\begin{remark}\label{rem:2}
  When employing the finite difference method with WSGD formulae for numerically solving non-periodic fractional differential equations on bounded interval,
  $p,q$ should be chosen satisfying $|p|\le1,|q|\le1$ to ensure that the nodes at which the values of $u$ needed in \ref{eq:2.2} are within the bounded interval; otherwise, we need to use another way to discretize the fractional derivative when $x$ is close to the
  right/left boundary. It was indicated in \cite{Tian:12} that the approximation by formula \ref{eq:2.2} with $(p,q)=(0,-1)$ turns out to be unstable for time dependent problems. Then two sets of $(p,q)=(1,0),~(1,-1)$ can be selected to establish the difference scheme for fractional diffusion equations, which is also appropriate for the compact difference approximations \ref{eq:2.11}.
  The coefficients $c_{p, q, l}^{\alpha}$ in \ref{eq:2.5} with $(p,q)=(1,0),~(1,-1)$ are
  \begin{equation}\label{eq:2.6}
    \begin{cases}
      c_{1,0,2}^{\alpha}=\frac{1}{24}(7\alpha-3\alpha^2),~~ c_{1,0,3}^{\alpha}=\frac{1}{24}(\alpha^3-3\alpha^2+2\alpha), & \\
      c_{1,-1,2}^{\alpha}=\frac{1}{24}(\alpha-3\alpha^2+12),~~ c_{1,-1,3}^{\alpha}=\frac{1}{24}(\alpha^3-4\alpha).&
    \end{cases}
  \end{equation}
  For $\alpha=2$, the WSGD operators \ref{eq:2.2} are the centered difference approximation of second order derivative when $(p,q)$ equals to $(1,0)$ or $(1,-1)$, and the approximations \ref{eq:2.11} behave as the compact difference operators of second order derivative as $c_{1,0,2}^2=c_{1,-1,2}^2=\frac{1}{12}$ and $c_{1,0,3}^2=c_{1,-1,3}^2=0$;
  for $\alpha=1$ and $(p,q)=(1,0)$, $c_{1,0,2}^1=\frac{1}{6}$ and $c_{1,0,3}^1=0$, then the centered and compact difference schemes for first order derivative are recovered.
\end{remark}
  For the cases of $(p, q)=(1, 0)$ and $(p, q)=(1, -1)$, the WSGD schemes at every point $x_i=a+i\ h~ (h=\frac{b-a}{N},~1\le i\le N-1)$ are denoted as
  \begin{equation}
    \begin{split}\label{eq:2.10}
      & {_L}\mathcal{D}_{h,p,q}^{\alpha}u(x_i)
        =\frac{1}{h^\alpha}\sum_{k=0}^{i+1}w_k^{(\alpha)}u(x_{i-k+1}),  \\
      & {_R}\mathcal{D}_{h,p,q}^{\alpha}u(x_i)
        =\frac{1}{h^\alpha}\sum_{k=0}^{N-i+1}w_k^{(\alpha)}u(x_{i+k-1}),
    \end{split}
  \end{equation}
  where
  \begin{equation}\left\{
    \begin{split}\label{eq:2.8}
      (p,q)=(1,0), \quad & w_0^{(\alpha)}=\frac{\alpha}{2}g_0^{(\alpha)},~
       w_k^{(\alpha)}=\frac{\alpha}{2}g_k^{(\alpha)}+\frac{2-\alpha}{2}g_{k-1}^{(\alpha)},~k\ge1;\\
      (p,q)=(1,-1), \quad & w_0^{(\alpha)}=\frac{2+\alpha}{4}g_0^{(\alpha)},~w_1^{(\alpha)}
       =\frac{2+\alpha}{4}g_1^{(\alpha)},\\
      & w_k^{(\alpha)}=\frac{2+\alpha}{4}g_k^{(\alpha)}
        +\frac{2-\alpha}{4}g_{k-2}^{(\alpha)},~k\ge2.
    \end{split}\right.
  \end{equation}
\begin{remark}\label{rem:3}
  Let $S_{N-1}$ be a symmetric tri-diagonal matrix of $(N-1)$-square, denoted by  $\mathrm{tridiag} (1, -2, 1)$. And we have the eigenvalues of the matrix $S_{N-1}$ in decreasing order (see \cite{Leveque:07}),
  \begin{equation}
    \lambda_k (S_{N-1})=-4\sin^2\Big(\frac{k\pi }{2N}\Big), ~~~ k=1,2, \cdots, N-1.
  \end{equation}
  Define
  \begin{equation}\label{eq:2.20}
    C_\alpha=I_{N-1}+c_{p,q,2}^{\alpha}S_{N-1},
  \end{equation}
  where $I_{N-1}$ is the unit matrix of $(N-1)$-square. Then the eigenvalues of $C_\alpha$ are given by
  \begin{equation}
    \lambda_k(C_\alpha)=1-4~c_{p, q,2}^{\alpha}\sin^2\Big(\frac{k\pi }{2N}\Big), ~~~
    k=1,2,\cdots,N-1.
  \end{equation}
  For the case of $(p, q)=(1, 0)$, we have
  \begin{equation}\label{eq:2.16}
    \lambda_k(C_\alpha) > 1-4~c_{1, 0, 2}^{\alpha}\ge
    \frac{23}{72}>0,
  \end{equation}
when $0<\alpha<\frac{7}{3}$; and $\lambda_k(C_\alpha) \ge 1$ when
$\alpha \ge \frac{7}{3}$.

 Then, from the Rayleigh-Ritz Theorem (see Theorem 8.8 in \cite{Zhang:11}), we know that the matrix $C_\alpha=(I_{N-1}+c_{1,0,2}^{\alpha}S_{N-1})$ is positive definite. And for the case of $(p, q)=(1, -1)$, we have
  \begin{equation}\label{eq:2.17}
    \lambda_k(C_\alpha) > 1-4~c_{1, -1, 2}^{\alpha} > 0 ~~~\text{iff}~~~\frac{1+\sqrt{73}}{6}<\alpha<\frac{1+\sqrt{145}}{6},
  \end{equation}
 and $\lambda_k(C_\alpha)\ge1$ when $\alpha\ge\frac{1+\sqrt{145}}{6}$; and $1-4~c_{1, -1, 2}^{\alpha}=0$ when $\alpha=\frac{1+\sqrt{73}}{6}$.  Thus, the matrix $C_\alpha=(I_{N-1}+c_{1,-1,2}^{\alpha}S_{N-1})$ is positive definite for any natural number $N$ if and only if  $\alpha\in [\frac{1+\sqrt{73}}{6}, \infty)$.
\end{remark}
\begin{lemma}[\cite{Tian:12}]\label{thm:4}
  Let the matrix $A_{\alpha}$ be of the following form
  \begin{equation}\label{eq:2.9}
    A_{\alpha}=
    \begin{pmatrix}
      w_1^{(\alpha)}     & w_0^{(\alpha)}     &                &                &  \\
      w_2^{(\alpha)}     & w_1^{(\alpha)}     & w_0^{(\alpha)} &                &  \\
      \vdots             & w_2^{(\alpha)}     & w_1^{(\alpha)} & \ddots         &  \\
      w_{N-2}^{(\alpha)} & \cdots             & \ddots         & \ddots         & w_0^{(\alpha)} \\
      w_{N-1}^{(\alpha)} & w_{N-2}^{(\alpha)} & \cdots         & w_2^{(\alpha)} & w_1^{(\alpha)} \\
    \end{pmatrix},
  \end{equation}
  where the diagonals $\{w_k^{(\alpha)}\}_{k=0}^{N-1}$ are the coefficients given in \ref{eq:2.8} corresponding to $(p,q)=(1,0),~(1,-1)$. Then we have that any eigenvalue $\lambda$ of $A_{\alpha}$ satisfies
  \begin{itemize}
    \item[(1)] $\mathrm{Re}(\lambda)\equiv0$, for $(p,q)=(1,0)$, $\alpha=1$,
    \item[(2)] $\mathrm{Re}(\lambda)<0$, for $(p,q)=(1,0)$, $1<\alpha\le2$,
    \item[(3)] $\mathrm{Re}(\lambda)<0$, for $(p,q)=(1,-1)$, $1\le\alpha\le2$.
  \end{itemize}
  Moreover, when $1<\alpha\le2$, the matrix $A_{\alpha}$ is negative definite, and the real parts of the eigenvalues $\lambda$ of matrix $c_1A_{\alpha}+c_2A_{\alpha}^{\mathrm{T}}$ are less than 0, where $c_1,c_2\ge0,c_1^2+c_2^2\neq0$.
\end{lemma}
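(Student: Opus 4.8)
The plan is to realize $A_\alpha$ as a finite Toeplitz matrix, read off its symbol, and reduce all of the assertions to a single statement about the sign of the real part of that symbol.

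First I would identify the generating function. Since the diagonals $\{w_k^{(\alpha)}\}$ in \ref{eq:2.8} are fixed linear combinations of the Gr\"unwald weights $g_k^{(\alpha)}$, whose generating function is $(1-z)^\alpha$, the series $W(z):=\sum_{k\ge0}w_k^{(\alpha)}z^k$ equals $(1-z)^\alpha\bigl(\tfrac{\alpha}{2}+\tfrac{2-\alpha}{2}z\bigr)$ for $(p,q)=(1,0)$ and $(1-z)^\alpha\bigl(\tfrac{2+\alpha}{4}+\tfrac{2-\alpha}{4}z^2\bigr)$ for $(p,q)=(1,-1)$; because $\sum_k|g_k^{(\alpha)}|<\infty$ when $\alpha>0$, $W$ is continuous on $|z|\le1$ and there agrees with the principal branch. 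As $(A_\alpha)_{ij}=w_{i-j+1}^{(\alpha)}$, $A_\alpha$ is the Toeplitz matrix whose symbol is $g(\theta):=e^{-i\theta}W(e^{i\theta})$, and a short Fourier-coefficient computation gives, for every $v\in\mathbb{C}^{N-1}$, the exact identity
\[
  v^{*}A_\alpha v=\frac{1}{2\pi}\int_0^{2\pi}g(\theta)\,|p_v(\theta)|^2\,\mathrm{d}\theta,\qquad p_v(\theta):=\sum_{j=1}^{N-1}v_je^{ij\theta},
\]
so that $\operatorname{Re}(v^{*}A_\alpha v)=\frac{1}{2\pi}\int_0^{2\pi}\operatorname{Re}g(\theta)\,|p_v(\theta)|^2\,\mathrm{d}\theta$. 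Everything then hinges on the sign of $\operatorname{Re}g$ on $(0,2\pi)$.

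To control $\operatorname{Re}g$ I would use $1-e^{i\theta}=2\sin(\theta/2)\,e^{i(\theta-\pi)/2}$, so that $(1-e^{i\theta})^\alpha=(2\sin(\theta/2))^\alpha e^{i\alpha(\theta-\pi)/2}$ on $(0,2\pi)$, factor the trigonometric-polynomial part of $W$ in the same way, and substitute $\psi=\theta-\pi$ to collapse the exponentials. For $(p,q)=(1,0)$ and $\alpha=1$ this yields $g(\theta)=-i\sin\theta$, purely imaginary, whence $\operatorname{Re}(v^{*}A_\alpha v)\equiv0$; this is item~(1), because $A_\alpha v=\lambda v$ forces $\lambda\|v\|^2=v^{*}A_\alpha v$. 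For $(p,q)=(1,0)$ and $1<\alpha\le2$, after dividing by $(2\sin(\theta/2))^\alpha>0$, the claim $\operatorname{Re}g<0$ reduces to $a\cos b-b\cos a>0$ with $a=\tfrac{\alpha}{2}\psi$, $b=\tfrac{2-\alpha}{2}\psi$ and $a+b=\psi\in(0,\pi)$ (so $0\le b<a$ and $b<\pi/2$): this is clear when $\cos a\le0$, and when $\cos a>0$ we have $0<b<a<\pi/2$ and it follows from the strict monotonicity of $t\mapsto(\cos t)/t$ on $(0,\pi/2)$, whose derivative $-(t\sin t+\cos t)/t^2$ is negative there. For $(p,q)=(1,-1)$ and $1\le\alpha\le2$ the same reduction gives $a\cos b+b\cos a>0$ with $a=\tfrac{2+\alpha}{2}\psi$, $b=\tfrac{2-\alpha}{2}\psi$, $\psi\in(0,\pi)$ (so $0\le b\le a/3$ and $a<2\pi$): this is immediate when $\cos a\ge0$, and when $\cos a<0$ then $a\in(\pi/2,3\pi/2)$, hence $a/3<\pi/2$, and for fixed $a$ the left side is decreasing in $b$, so it suffices to take $b=a/3$, where it equals $\tfrac{a}{3}\bigl(3\cos(a/3)+\cos a\bigr)=\tfrac{4a}{3}\cos^{3}(a/3)$ by the triple-angle identity, which is positive since $a/3<\pi/2$.

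Finally, for $1<\alpha\le2$ (both parameter choices) the above shows $\operatorname{Re}g(\theta)<0$ on $(0,2\pi)$; since $|p_v|^2$ is a nonzero trigonometric polynomial, hence positive off a finite set, this forces $\operatorname{Re}(v^{*}A_\alpha v)<0$ for $v\ne0$, i.e.\ $A_\alpha$ is negative definite, and applying it to an eigenvector gives $\operatorname{Re}\lambda=\operatorname{Re}(v^{*}A_\alpha v)/\|v\|^2<0$, which is items~(2) and~(3). Since $A_\alpha$ is real, $v^{*}A_\alpha^{\mathrm{T}}v=\overline{v^{*}A_\alpha v}$, so any eigenpair of $c_1A_\alpha+c_2A_\alpha^{\mathrm{T}}$ satisfies $\operatorname{Re}\lambda\,\|v\|^2=(c_1+c_2)\operatorname{Re}(v^{*}A_\alpha v)<0$, as $c_1+c_2>0$. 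I expect the main obstacle to be the middle step — extracting $\operatorname{Re}g$ in a usable closed form and verifying the inequalities $a\cos b\mp b\cos a>0$, together with tracking the (harmless, null-set) degeneracy $\operatorname{Re}g\to0$ as $\theta\to0,2\pi$; the Toeplitz-symbol reduction itself is the conceptual step but is routine.
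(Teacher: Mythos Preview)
The paper does not supply its own proof of this lemma; it is quoted verbatim from \cite{Tian:12}. Your Toeplitz--symbol approach is exactly the standard route (and is what the cited reference uses): write $g(\theta)=e^{-i\theta}W(e^{i\theta})$, express the quadratic form $v^{*}A_\alpha v$ as $\frac{1}{2\pi}\int g|p_v|^2$, and reduce everything to the sign of $\operatorname{Re}g$. Your generating functions, the factorization $(1-e^{i\theta})^\alpha=(2\sin(\theta/2))^\alpha e^{i\alpha(\theta-\pi)/2}$, and the resulting reductions to $a\cos b\mp b\cos a>0$ are all correct, and your verifications of those two elementary inequalities are clean.

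Two small points to tidy up. First, your substitution $\psi=\theta-\pi$ and the stated range $\psi\in(0,\pi)$ only cover $\theta\in(\pi,2\pi)$; you should say explicitly that $\operatorname{Re}g(2\pi-\theta)=\operatorname{Re}g(\theta)$ (since $W$ has real coefficients, $g(2\pi-\theta)=\overline{g(\theta)}$), which lets you restrict to $\psi\ge0$. Second, at $\psi=0$ (i.e.\ $\theta=\pi$) your formulas $\frac{1}{\psi}(a\cos b\mp b\cos a)$ are of the form $0/0$; either note that the singularity is removable and the limit is positive, or simply evaluate $g(\pi)$ directly. Neither issue affects the substance of the argument.
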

\begin{lemma}[\cite{Minc:64, Laub:05}]\label{thm:2}
  The Matrix $A\in \mathbb{R}^{n\times n}$ is asymptotically stable if and only if
  there exists a symmetric and positive (or negative) definite solution $X\in \mathbb{R}^{n\times n}$ to the Lyapunov equation
  \begin{equation}
    AX+XA^{\mathrm{T}}=C,
  \end{equation}
  where $C=C^\mathrm{T}\in \mathbb{R}^{n\times n}$ is a negative (or positive) definite matrix. And a matrix $A$ is called asymptotically stable if all its eigenvalues have real parts in the open left half-plane, i.e., $Re\lambda (A)<0$.
\end{lemma}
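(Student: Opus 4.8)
The plan is to prove the two implications separately, and to dispose of the ``positive / negative'' alternative by a single sign change: it suffices to treat the case in which $X=X^{\mathrm{T}}$ is positive definite and $C=C^{\mathrm{T}}$ is negative definite, since the other case is obtained by replacing the pair $(X,C)$ with $(-X,-C)$, which again satisfies $A(-X)+(-X)A^{\mathrm{T}}=-C$.

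For the ``if'' direction, suppose $AX+XA^{\mathrm{T}}=C$ with $X$ symmetric positive definite and $C$ symmetric negative definite, and let $\mu\in\mathbb{C}$ be an arbitrary eigenvalue of $A$. Since $A$ is real, $\mu$ is also an eigenvalue of $A^{\mathrm{T}}$, so I can pick $u\in\mathbb{C}^{n}\setminus\{0\}$ with $A^{\mathrm{T}}u=\mu u$, whence $u^{*}A=\bar\mu u^{*}$. Then the scalar identity $u^{*}(AX+XA^{\mathrm{T}})u=u^{*}Cu$ collapses, using these two relations, to $(\mu+\bar\mu)\,(u^{*}Xu)=u^{*}Cu$, i.e. $2\,\mathrm{Re}(\mu)\,(u^{*}Xu)=u^{*}Cu$. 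Positive definiteness of $X$ forces $u^{*}Xu>0$ and negative definiteness of $C$ forces $u^{*}Cu<0$, hence $\mathrm{Re}(\mu)<0$. As $\mu$ was an arbitrary eigenvalue of $A$, the matrix $A$ is asymptotically stable.

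For the ``only if'' direction, suppose $A$ is asymptotically stable and $C=C^{\mathrm{T}}$ is negative definite; I would exhibit the solution explicitly as
\[
  X=\int_{0}^{\infty}e^{At}(-C)\,e^{A^{\mathrm{T}}t}\,\mathrm{d}t .
\]
The first point is convergence: asymptotic stability yields, through the Jordan canonical form $A=PJP^{-1}$, a bound $\|e^{At}\|\le M e^{-\omega t}$ with $M,\omega>0$, so the integrand is dominated by $M^{2}\|C\|\,e^{-2\omega t}$ and the improper integral converges absolutely. It is manifestly symmetric, and it is positive definite because for every $\xi\neq0$ the integrand in the quadratic form equals $\big(e^{A^{\mathrm{T}}t}\xi\big)^{\mathrm{T}}(-C)\big(e^{A^{\mathrm{T}}t}\xi\big)>0$ for all $t\ge0$. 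Finally, recognizing the integrand as a total derivative,
\[
  AX+XA^{\mathrm{T}}=\int_{0}^{\infty}\frac{\mathrm{d}}{\mathrm{d}t}\Big(e^{At}(-C)e^{A^{\mathrm{T}}t}\Big)\mathrm{d}t
  =\Big[e^{At}(-C)e^{A^{\mathrm{T}}t}\Big]_{0}^{\infty}=0-(-C)=C ,
\]
where the upper boundary term vanishes by the exponential bound; this gives the desired solution.

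Apart from the elementary linear algebra, the one genuinely analytic ingredient — and the step I expect to be the main obstacle — is the exponential-decay estimate $\|e^{At}\|\le M e^{-\omega t}$, which is exactly where the spectral hypothesis $\mathrm{Re}\,\lambda(A)<0$ is consumed; it is obtained by passing to the Jordan form and bounding $\|e^{Jt}\|$ block by block by polynomials in $t$ times $e^{\mathrm{Re}(\lambda)t}$. One may additionally note that when $A$ is asymptotically stable the Lyapunov equation has a \emph{unique} solution, since the linear operator $X\mapsto AX+XA^{\mathrm{T}}$ has eigenvalues $\{\lambda_{i}+\lambda_{j}\}$, all of which have negative real part and are therefore nonzero; but uniqueness is not needed for the statement as given.
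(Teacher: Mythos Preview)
Your proof is correct and is the standard textbook argument for the Lyapunov stability criterion. Note, however, that the paper does not supply its own proof of this lemma: it is quoted as a known result from the cited references \cite{Minc:64, Laub:05}, and is used only as a black-box tool in the stability analyses of Theorems~1 and~3. Your argument---the eigenvector computation for the ``if'' direction and the explicit integral formula $X=\int_{0}^{\infty}e^{At}(-C)e^{A^{\mathrm{T}}t}\,\mathrm{d}t$ for the ``only if'' direction---is precisely the classical proof one finds in those references, so there is nothing to compare against here beyond confirming that what you wrote is the expected derivation.
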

Lemma \ref{thm:4} and \ref{thm:2} play a central role in analyzing
the stability and convergence of the compact difference
approximations in the sequel. Next taking $(p,q)=(1,0)$ and
$(p,q)=(1,-1)$, respectively, we consider the compact difference
schemes of approximating the space fractional diffusion equations.
\section{One Dimensional Space Fractional Diffusion Equation}
In this section, we consider the following two-sided one dimensional
space fractional diffusion equation
\begin{equation}\label{eq:3.1}
  \begin{cases}
   \displaystyle \frac{\partial u(x, t)}{\partial t}=K_1~{_a}D_x^{\alpha}u(x, t)+K_2~{_x}D_b^{\alpha}u(x, t)+f(x,t),  &   \text{$(x, t) \in (a, b)\times (0, T]$,} \\
    u(x,0)=u_0(x),    & \text{$x\in [a, b]$},\\
    u(a,t)=\phi_a(t),\ \ u(b,t)=\phi_b(t), &\text{$t\in [0, T]$},
  \end{cases}
\end{equation}
where ${_a}D_x^{\alpha}$ and $_xD_b^{\alpha}$ are the left and right
Riemann-Liouville fractional derivatives with $1<\alpha\leq 2$,
respectively. The diffusion coefficients $K_1$ and $K_2$ are
nonnegative constants with $K_1^2+K_2^2\neq0$. If $K_1\neq0$, then
$\phi_a(t)\equiv0$, and if $K_2\neq0$, then $\phi_b(t)\equiv0$. In
the following analysis of the numerical method, we assume that
\ref{eq:3.1} has a unique and sufficiently smooth solution.

\subsection{Compact Difference Scheme}
We partition the interval $[a, b]$ into a uniform mesh with the
space stepsize $h=(b-a)/N$ and the time stepsize $\tau=T/M$, where
$N, M$ are two positive integers. And the set of grid points are
denoted by $x_i=a+ih$ and $t_n=n\tau$ for $0\leq i\leq N$ and $0\leq
n\leq M$. Denoting $t_{n+1/2}=(t_n+t_{n+1})/2$ for $0\leq n\leq
M-1$, we introduce the following notations
\begin{equation*}
  u_i^n=u(x_i, t_n), \ \ u_i^{n+1/2}=\frac{1}{2}(u(x_i, t_n)+u(x_i, t_{n+1})), \ \ f_i^{n+1/2}=f(x_i, t_{n+1/2}), \ \ \delta_t u_i^n=(u_i^{n+1}-u_i^n)/\tau.
\end{equation*}
Employing the Crank-Nicolson technique for time discretization of \ref{eq:3.1}, we get
\begin{equation}\label{eq:3.2}
  \delta_t u_i^n-\frac{1}{2}\Big(K_1({_a}D_x^{\alpha}u)_i^n+K_1({_a}D_x^{\alpha}u)_i^{n+1}
  +K_2({_x}D_b^{\alpha}u)_i^n+K_2({_x}D_b^{\alpha}u)_i^{n+1} \Big)=f_i^{n+1/2}+O(\tau^2).
\end{equation}
Recalling the definition of operator $\mathcal{C}_x$ and \ref{eq:2.12}, we have
\begin{equation}\label{eq:3.4}
  \begin{split}
    & \mathcal{C}_x\Big({_a}D_x^{\alpha}u_i\Big)
      ={_L}\mathcal{D}_{h,p,q}^{\alpha}u_i
      -c_{p,q,3}^{\alpha}h^3~{_a}D_x^{\alpha+3}u_i+O(h^4), \\
   & \mathcal{C}_x\Big({_x}D_{b}^{\alpha}u_i\Big)
     ={_R}\mathcal{D}_{h,p,q}^{\alpha}u_i
     -c_{p,q,3}^{\alpha}h^3~{_x}D_{b}^{\alpha+3}u_i+O(h^4),
  \end{split}
\end{equation}
where ${_L}\mathcal{D}_{h,p,q}^{\alpha}$ and
${_R}\mathcal{D}_{h,p,q}^{\alpha}$ are given in \ref{eq:2.10}.
Acting $\tau\mathcal{C}_x$ on both sides \ref{eq:3.2} and then
plugging \ref{eq:3.4} into it, we obtain
\begin{equation}
\label{eq:3.5}
  \begin{split}
    \mathcal{C}_x&u_i^{n+1}-\frac{K_1\tau }{2}{_L}\mathcal{D}_{h,p,q}^{\alpha}u_i^{n+1}
      -\frac{K_2\tau }{2}{_R}\mathcal{D}_{h,p,q}^{\alpha}u_i^{n+1}  \\
    &=\mathcal{C}_xu_i^n+\frac{K_1\tau}{2}{_L}\mathcal{D}_{h,p,q}^{\alpha}u_i^n+\frac{K_2\tau}{2}{_R}\mathcal{D}_{h,p,q}^{\alpha}u_i^n+\tau \mathcal{C}_xf_i^{n+1/2}+\tau\varepsilon_i^{n+1/2},
  \end{split}
\end{equation}
where
\begin{equation}\label{eq:3.3}
  \varepsilon_i^{n+1/2}=-\Big(K_1c_{p,q,3}^{\alpha}~{_a}D_x^{\alpha+3}u_i^{n+1/2}+K_2c_{p,q,3}^{\alpha}~
  {_x}D_b^{\alpha+3}u_i^{n+1/2}\Big)h^3+O(\tau^2+h^4).
\end{equation}
Denoting by $U_i^n$ the numerical approximation of $u_i^n$, we derive the compact difference scheme
\begin{equation}
  \begin{split}\label{eq:3.7}
    \mathcal{C}_x&U_i^{n+1}-\frac{K_1\tau}{2h^{\alpha}}\sum_{k=0}^{i+1}w_k^{(\alpha)}U_{i-k+1}^{n+1}
        -\frac{K_2\tau}{2h^{\alpha}}\sum_{k=0}^{N-i+1}w_k^{(\alpha)}U_{i+k-1}^{n+1} \\
    &=\mathcal{C}_xU_i^n+\frac{K_1\tau}{2h^{\alpha}}\sum_{k=0}^{i+1}w_k^{(\alpha)}U_{i-k+1}^n
        +\frac{K_2\tau}{2h^{\alpha}}\sum_{k=0}^{N-i+1}w_k^{(\alpha)}U_{i+k-1}^n+\tau \mathcal{C}_xf_i^{n+1/2}.
  \end{split}
\end{equation}
For the convenience of implementation, we define
\begin{equation*}
  U^n=\Big(U_1^n, U_2^n,\cdots, U_{N-1}^n\Big)^{\mathrm{T}}, \ \ \ F^n=\Big(f_1^{n+1/2}, f_2^{n+1/2}, \cdots,
  f_{N-1}^{n+1/2}\Big)^{\mathrm{T}},
\end{equation*}
and reformulate the scheme \ref{eq:3.7} as the following matrix form
\begin{equation}\label{eq:3.8}
  \Big(C_{\alpha}-\frac{\tau}{2h^{\alpha}}(K_1A_{\alpha}+K_2A_{\alpha}^{\mathrm{T}})\Big)U^{n+1}
  =\Big(C_{\alpha}+\frac{\tau}{2h^{\alpha}}(K_1A_{\alpha}+K_2A_{\alpha}^{\mathrm{T}})\Big)U^{n}+\tau C_{\alpha}F^n+H^n,
\end{equation}
where $A_{\alpha}$ and $C_{\alpha}$ are given in \ref{eq:2.9} and
\ref{eq:2.20}, respectively, and
\begin{equation}
\begin{split}
 H^n=& \begin{bmatrix}
   c_{p,q,2}^{\alpha}\\
   0\\
   \vdots \\
   0\\
   0
 \end{bmatrix}(U_0^n-U_0^{n+1}+\tau f_0^{n+1/2})+
  \begin{bmatrix}
   0\\
   0\\
   \vdots \\
   0\\
   c_{p,q,2}^{\alpha}
 \end{bmatrix}(U_N^n-U_N^{n+1}+\tau f_N^{n+1/2})+  \\
 &\frac{\tau}{2h^{\alpha}}
 \begin{bmatrix}
   K_1w_2^{(\alpha)}+K_2w_0^{(\alpha)}\\
   K_1w_3^{(\alpha)}\\
   \vdots \\
   K_1w_{N-1}^{(\alpha)}\\
   K_1w_N^{(\alpha)}
 \end{bmatrix}(U_{0}^n+U_{0}^{n+1})+
 \frac{\tau}{2h^{\alpha}}
 \begin{bmatrix}
   K_2w_N^{(\alpha)}\\
   K_2w_{N-1}^{(\alpha)}\\
   \vdots \\
   K_2w_3^{(\alpha)}\\
   K_1w_0^{(\alpha)}+K_2w_2^{(\alpha)}
 \end{bmatrix}(U_{N}^n+U_{N}^{n+1}).
 \end{split}
\end{equation}
\subsection{Stability and Convergence}
Next we consider the stability and convergence analysis for the scheme \ref{eq:3.7}. Let
\begin{equation*}
  V_h=\big\{v:v=\{v_{i}\} \text{ is a grid function in } \{x_i=a+ih\}_{i=0}^{N}\text{~and~}v_{0}=v_{N}=0\big\}.
\end{equation*}
For any $v=\{v_{i}\}\in V_h$, we define its pointwise maximum norm
and the discrete $L^2$ norm as
\begin{equation}
  \|v\|_{\infty}=\max\limits_{1\le i\le N-1} |v_{i}|,~~~\|v\|^2=h\sum_{i=1}^{N-1}v_{i}^2.
\end{equation}
\begin{theorem}
  For the case of $(p,q)=(1,0)$, the difference scheme \ref{eq:3.7} is unconditionally stable for all $1<\alpha\le 2$; and for the case of $(p,q)=(1,-1)$,  the difference scheme \ref{eq:3.7} is also unconditionally stable for $\frac{1+\sqrt{73}}{6}\le\alpha\le 2$.
\end{theorem}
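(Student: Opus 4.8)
The plan is to recast the scheme \ref{eq:3.7} in the matrix form \ref{eq:3.8} and then run a discrete energy estimate in the inner product generated by $C_\alpha$. Abbreviate $B_\alpha=K_1A_\alpha+K_2A_\alpha^{\mathrm{T}}$ and $M_\alpha=\frac{\tau}{2h^{\alpha}}B_\alpha$, so that \ref{eq:3.8} reads $(C_\alpha-M_\alpha)U^{n+1}=(C_\alpha+M_\alpha)U^{n}+G^n$, where $G^n:=\tau C_\alpha F^n+H^n$ gathers the source term together with the known boundary data. The whole argument rests on two structural facts. First, $C_\alpha$ is symmetric and positive definite: for $(p,q)=(1,0)$ this holds for every $1<\alpha\le2$, and for $(p,q)=(1,-1)$ it holds precisely on $\tfrac{1+\sqrt{73}}{6}\le\alpha\le2$ -- these are exactly the ranges isolated in Remark \ref{rem:3} via \ref{eq:2.16}--\ref{eq:2.17}, which in addition give $\lambda_k(C_\alpha)\in[\tfrac{23}{72},1)$ in the first case and $\lambda_k(C_\alpha)\in(0,1)$ in the second. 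Second, the symmetric part of $M_\alpha$ is $\tfrac12(M_\alpha+M_\alpha^{\mathrm{T}})=\tfrac{\tau(K_1+K_2)}{4h^{\alpha}}(A_\alpha+A_\alpha^{\mathrm{T}})$, and since $A_\alpha$ is negative definite for $1<\alpha\le2$ by Lemma \ref{thm:4} and $K_1+K_2>0$, this symmetric part is negative definite. It is exactly at this second point that the choices $(p,q)=(1,0),(1,-1)$ are used and $(p,q)=(0,-1)$, which is known to be unstable, is ruled out.

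First I would note that each time step is well posed: the symmetric part of $C_\alpha-M_\alpha$ equals $C_\alpha-\tfrac12(M_\alpha+M_\alpha^{\mathrm{T}})$, the sum of a positive definite and a positive semidefinite matrix, hence positive definite, so $C_\alpha-M_\alpha$ is nonsingular. Then, using that $C_\alpha$ is symmetric positive definite, introduce $\langle u,v\rangle_{C_\alpha}=u^{\mathrm{T}}C_\alpha v$ and $\|u\|_{C_\alpha}^2=\langle u,u\rangle_{C_\alpha}$, a genuine norm on $\mathbb{R}^{N-1}$. Taking the Euclidean inner product of $(C_\alpha-M_\alpha)U^{n+1}=(C_\alpha+M_\alpha)U^{n}+G^n$ with the vector $U^{n+1}+U^{n}$ and cancelling the cross terms via the symmetry $\langle C_\alpha U^{n+1},U^{n}\rangle=\langle C_\alpha U^{n},U^{n+1}\rangle$ leads to
\[
  \|U^{n+1}\|_{C_\alpha}^2-\|U^{n}\|_{C_\alpha}^2
  =\tfrac12\big\langle (M_\alpha+M_\alpha^{\mathrm{T}})(U^{n+1}+U^{n}),\,U^{n+1}+U^{n}\big\rangle
   +\big\langle G^n,\,U^{n+1}+U^{n}\big\rangle .
\]
By the second structural fact the first term on the right is $\le 0$. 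Hence in the homogeneous situation $G^n\equiv 0$ -- which is precisely stability with respect to perturbations of the initial data -- one gets $\|U^{n+1}\|_{C_\alpha}\le\|U^{n}\|_{C_\alpha}$ and, iterating, $\|U^{n}\|_{C_\alpha}\le\|U^{0}\|_{C_\alpha}$ for all $n$, with constant $1$ independent of $\tau$, $h$ and $N$. The general case follows by bounding $\langle G^n,U^{n+1}+U^{n}\rangle\le\|G^n\|_{C_\alpha^{-1}}(\|U^{n+1}\|_{C_\alpha}+\|U^{n}\|_{C_\alpha})$, dividing through by $\|U^{n+1}\|_{C_\alpha}+\|U^{n}\|_{C_\alpha}$ and summing, which yields $\|U^{n}\|_{C_\alpha}\le\|U^{0}\|_{C_\alpha}+\sum_{j=0}^{n-1}\|G^j\|_{C_\alpha^{-1}}$.

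To close, I would convert back to the discrete $L^2$ norm: since the eigenvalues of $C_\alpha$ lie in a fixed positive interval (Remark \ref{rem:3}), the norms $\|\cdot\|_{C_\alpha}$ and $\|\cdot\|$ are equivalent with constants depending only on $\alpha$, which turns the above into an estimate of the form $\|U^{n}\|\le C(\|U^{0}\|+\text{data})$, i.e. unconditional stability in the discrete $L^2$ norm for the stated ranges of $\alpha$. The step I expect to be the real crux is the second structural fact -- that $\tfrac12(M_\alpha+M_\alpha^{\mathrm{T}})$ is negative (semi)definite for all admissible $K_1,K_2\ge0$ simultaneously; the delicate analysis behind it (the negative definiteness of $A_\alpha$ and the sign of the real parts of the eigenvalues of $c_1A_\alpha+c_2A_\alpha^{\mathrm{T}}$) is already packaged in Lemma \ref{thm:4}, so the remaining work here is only the routine energy bookkeeping above. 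An equivalent route is to show directly that the amplification matrix $(C_\alpha-M_\alpha)^{-1}(C_\alpha+M_\alpha)$ is a contraction in the $\|\cdot\|_{C_\alpha}$ norm.
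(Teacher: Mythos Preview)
Your proof is correct and, in fact, yields a slightly stronger conclusion than the paper's own argument. The two proofs share the same structural ingredients---positive definiteness of $C_\alpha$ from Remark~\ref{rem:3} and negative definiteness of $M_\alpha+M_\alpha^{\mathrm{T}}$ from Lemma~\ref{thm:4}---but they diverge in how those ingredients are assembled. The paper does \emph{not} run an energy estimate for stability. Instead it observes that $(C_\alpha^{-1}M_\alpha)C_\alpha^{-1}+C_\alpha^{-1}(C_\alpha^{-1}M_\alpha)^{\mathrm{T}}=C_\alpha^{-1}(M_\alpha+M_\alpha^{\mathrm{T}})C_\alpha^{-1}$ is negative definite, reads this as a Lyapunov equation, and invokes Lemma~\ref{thm:2} to conclude that every eigenvalue $\lambda$ of $C_\alpha^{-1}M_\alpha$ has $\mathrm{Re}\,\lambda<0$; since the amplification matrix equals $(I-C_\alpha^{-1}M_\alpha)^{-1}(I+C_\alpha^{-1}M_\alpha)$, its eigenvalues are $(1+\lambda)/(1-\lambda)$ and hence lie strictly inside the unit disk. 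So the paper proves spectral radius $<1$, while your argument gives an actual norm contraction $\|U^{n+1}\|_{C_\alpha}\le\|U^{n}\|_{C_\alpha}$. Your route is more elementary (no appeal to the Lyapunov lemma) and is in fact precisely the technique the paper itself adopts one theorem later, in the convergence proof, where the same pairing with $e^{n+1}+e^n$ appears; so you have effectively unified the stability and convergence arguments. One small caveat: at the endpoint $\alpha=\tfrac{1+\sqrt{73}}{6}$ for $(p,q)=(1,-1)$ the smallest eigenvalue of $C_\alpha$ is $\cos^2\!\big(\tfrac{(N-1)\pi}{2N}\big)$, which is positive but not bounded away from zero uniformly in $N$, so the equivalence constant between $\|\cdot\|_{C_\alpha}$ and $\|\cdot\|$ degenerates there; the paper's spectral-radius conclusion has the analogous limitation, and indeed the convergence theorem excludes this endpoint.
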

\begin{proof}
  Denoting $D_{\alpha}=\frac{\tau}{2h^{\alpha}}(K_1A_{\alpha}+K_2A_{\alpha}^{\mathrm{T}})$, we rewrite \ref{eq:3.8} as
  \begin{equation}
    (C_{\alpha}-D_{\alpha})U^{n+1}=(C_{\alpha}+D_{\alpha})U^{n}+\tau C_{\alpha}F^n+H^n.
  \end{equation}
From Remark \ref{rem:3}, we know that $C_{\alpha}$ is a symmetric
and positive definite matrix when $(p, q)=(1, 0)$ with $1<\alpha\le
2$ and $(p, q)=(1, -1)$ with $\frac{1+\sqrt{73}}{6}\le\alpha\le 2$,
which follows that $C_{\alpha}^{-1}$ is also symmetric and positive
definite. On the other hand, Lemma \ref{thm:4} shows that the
eigenvalues of the matrix
$\frac{D_{\alpha}+D_{\alpha}^{\mathrm{T}}}{2}=\frac{\tau(K_1+K_2)}{4h^{\alpha}}(A_{\alpha}+A_{\alpha}^{\mathrm{T}})$
are all negative for $1<\alpha\le 2$, thus
$(D_{\alpha}+D_{\alpha}^{\mathrm{T}})$ is a symmetric and negative
definite matrix. Then, for any
$v=(v_1,v_2,\cdots,v_{N-1})^{\mathrm{T}}\in\mathbb{R}^{N-1}\setminus{0}$,
there exists
  \begin{equation}
    v^{\mathrm{T}}\Big((C_{\alpha}^{-1}D_{\alpha})C_{\alpha}^{-1}
    +C_{\alpha}^{-1}(C_{\alpha}^{-1}D_{\alpha})^{\mathrm{T}}\Big)v
    =v^{\mathrm{T}}C_{\alpha}^{-1}(D_{\alpha}+D_{\alpha}^{\mathrm{T}})C_{\alpha}^{-1}v < 0,
  \end{equation}
which means that the matrix
$\big((C_{\alpha}^{-1}D_{\alpha})C_{\alpha}^{-1}+C_{\alpha}^{-1}(C_{\alpha}^{-1}D_{\alpha})^{\mathrm{T}}\big)$
is negative definite. Then it yields from Lemma \ref{thm:2} that all
the eigenvalues of $({C_{\alpha}^{-1}D_{\alpha}})$ have negative
real parts. In addition, $\lambda$ is an eigenvalue of
$({C_{\alpha}^{-1}D_{\alpha}})$ if and only if
$\frac{1+\lambda}{1-\lambda}$ is an eigenvalue of matrix
$(I-C_{\alpha}^{-1}D_{\alpha})^{-1}(I+C_{\alpha}^{-1}D_{\alpha})$,
and $|\frac{1+\lambda}{1-\lambda}|<1$ holds. Hence, the spectral
radius of the matrix
$(C_{\alpha}-D_{\alpha})^{-1}(C_{\alpha}+D_{\alpha})=(I-C_{\alpha}^{-1}D_{\alpha})^{-1}(I+C_{\alpha}^{-1}D_{\alpha})$
is less than one, and the difference scheme \ref{eq:3.7} is stable.
\end{proof}
\begin{lemma}[Discrete Gronwall Lemma \cite{Quarteroni:97}]\label{lem:3}
  Assume that $\{k_n\}$ and $\{p_n\}$ are nonnegative sequences, and the sequence $\{\phi_n\}$ satisfies
  \begin{equation*}
    \phi_0\leq g_0,\ \ \ \ \phi_n\leq g_0+\sum_{l=0}^{n-1}p_l+\sum_{l=0}^{n-1}k_l\phi_l,\ \ \ n\geq 1,
  \end{equation*}
  where $g_0\geq 0$. Then the sequence $\{\phi_n\}$ satisfies
  \begin{equation}
    \phi_n\leq \Big(g_0+\sum_{l=0}^{n-1}p_l\Big)\exp\Big(\sum_{l=0}^{n-1}k_l\Big),\ \ \ n\geq 1.
  \end{equation}
\end{lemma}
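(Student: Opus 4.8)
The plan is to reduce the cumulative inequality to a first-order recursion and then solve that recursion explicitly by the summation-factor technique. First I would introduce the auxiliary sequence
\[
  \chi_n = g_0 + \sum_{l=0}^{n-1} p_l + \sum_{l=0}^{n-1} k_l \phi_l,
\]
with the convention that empty sums vanish, so that $\chi_0 = g_0$. The hypotheses then give $\phi_0 \le g_0 = \chi_0$ and $\phi_n \le \chi_n$ for $n \ge 1$, hence $\phi_n \le \chi_n$ for every $n \ge 0$. The point of this substitution is that $\chi_n$ obeys a clean telescoping relation: forming the difference of consecutive terms and using $\phi_n \le \chi_n$ together with $k_n \ge 0$ yields $\chi_{n+1} = \chi_n + p_n + k_n \phi_n \le (1+k_n)\chi_n + p_n$, valid already at $n=0$.

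Next I would solve this linear recursion. Setting $P_n = \prod_{l=0}^{n-1}(1+k_l)$ with $P_0 = 1$, I would divide the recursion by $P_{n+1} = (1+k_n)P_n > 0$ to obtain $\chi_{n+1}/P_{n+1} \le \chi_n/P_n + p_n/P_{n+1}$. Summing this telescoped inequality from $0$ to $n-1$, recalling $\chi_0/P_0 = g_0$, and multiplying back through by $P_n$ gives the closed form
\[
  \chi_n \le g_0\,P_n + \sum_{j=0}^{n-1} p_j\,\frac{P_n}{P_{j+1}}.
\]

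The remaining step is to collapse the products and pass to exponentials. Since every factor satisfies $1+k_l \ge 1$, the partial product $P_n/P_{j+1} = \prod_{l=j+1}^{n-1}(1+k_l)$ is dominated by the full product $P_n$, so the right-hand side above is bounded by $P_n\big(g_0 + \sum_{j=0}^{n-1} p_j\big)$. Applying the elementary inequality $1 + x \le e^x$ factor by factor gives $P_n \le \exp\big(\sum_{l=0}^{n-1} k_l\big)$, and combining this with $\phi_n \le \chi_n$ delivers exactly the stated bound for all $n \ge 1$.

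I do not anticipate a genuine obstacle; the proof is essentially bookkeeping once the right auxiliary sequence is in hand. The only care required is tracking the index ranges of the empty and partial products (in particular the edge case $j = n-1$, where $P_n/P_{j+1} = 1$) and verifying that the recursion holds at $n=0$ so that nothing is lost at the initial step. The single idea that drives everything is the sequence $\chi_n$, which converts the inequality involving the whole history $\{\phi_l\}_{l<n}$ into a recursion relating only consecutive terms.
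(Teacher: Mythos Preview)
Your argument is correct: introducing $\chi_n$, deriving the one-step recursion $\chi_{n+1}\le(1+k_n)\chi_n+p_n$, solving it via the summation factor $P_n=\prod_{l=0}^{n-1}(1+k_l)$, and then bounding $P_n/P_{j+1}\le P_n$ and $P_n\le\exp\big(\sum_{l=0}^{n-1}k_l\big)$ is the standard route and goes through without gaps. Note, however, that the paper itself does not prove this lemma at all; it is simply quoted from \cite{Quarteroni:97}, so there is no proof in the paper to compare yours against.
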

\begin{theorem}
  Let $u_i^n$ be the exact solution of problem \ref{eq:3.1}, and $U_i^n$ be the solution of difference scheme \ref{eq:3.7} at grid point $(x_i, t_n)$. Then the following estimate
  \begin{equation}\label{eq:3.14}
    \|u^n-U^n\|\leq c(\tau^2+h^3), ~~~1\leq n\leq M,
  \end{equation}
  holds for all $1<\alpha< 2$ with $(p,q)=(1,0)$ and $\frac{1+\sqrt{73}}{6}<\alpha< 2$ with $(p,q)=(1,-1)$.
\end{theorem}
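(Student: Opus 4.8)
The plan is to set up the error equation by subtracting the numerical scheme \ref{eq:3.7} from the exact-solution identity \ref{eq:3.5}, deriving an equation for the error $e_i^n := u_i^n - U_i^n$. Since the boundary and initial data are matched exactly (so $e_0^n = e_N^n = 0$ and $e_i^0 = 0$), the terms collected in $H^n$ vanish for the error, and in matrix form we obtain
\begin{equation*}
  (C_\alpha - D_\alpha)e^{n+1} = (C_\alpha + D_\alpha)e^{n} + \tau\, R^{n},
\end{equation*}
where $e^n = (e_1^n,\dots,e_{N-1}^n)^{\mathrm T}$, $D_\alpha = \frac{\tau}{2h^\alpha}(K_1 A_\alpha + K_2 A_\alpha^{\mathrm T})$, and $R^n$ is the truncation-error vector whose entries are the $\varepsilon_i^{n+1/2}$ from \ref{eq:3.3}; by the smoothness assumption on $u$ and Proposition \ref{prop:1} we have $\|R^n\| \le c(\tau^2 + h^3)$ uniformly in $n$.

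Next I would introduce a discrete inner product / energy functional adapted to $C_\alpha$. Because $C_\alpha$ is symmetric positive definite (Remark \ref{rem:3}) in exactly the parameter ranges stated in the theorem, one can define the weighted norm $\|v\|_{C_\alpha}^2 := h\, v^{\mathrm T} C_\alpha v$, which is equivalent to $\|v\|^2$ with constants depending only on $c_{p,q,2}^\alpha$ (via the eigenvalue bounds \ref{eq:2.16}, \ref{eq:2.17}). Writing $e^{n+1} = e^n + (C_\alpha - D_\alpha)^{-1}(2D_\alpha e^n + \tau R^n)$, or more symmetrically taking the inner product of the error equation with $e^{n+1} + e^n$, the key algebraic fact is that $(e^{n+1}+e^n)^{\mathrm T} D_\alpha (e^{n+1}+e^n) \le 0$: indeed this equals $\tfrac12 (e^{n+1}+e^n)^{\mathrm T}(D_\alpha + D_\alpha^{\mathrm T})(e^{n+1}+e^n)$, and $D_\alpha + D_\alpha^{\mathrm T} = \frac{\tau(K_1+K_2)}{2h^\alpha}(A_\alpha + A_\alpha^{\mathrm T})$ is negative semidefinite by Lemma \ref{thm:4}. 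This dissipativity lets the $D_\alpha$ contribution be discarded, yielding an estimate of the form $\|e^{n+1}\|_{C_\alpha}^2 - \|e^n\|_{C_\alpha}^2 \le c\,\tau\, \|R^n\|\,(\|e^{n+1}\|_{C_\alpha} + \|e^n\|_{C_\alpha})$, hence $\|e^{n+1}\|_{C_\alpha} \le \|e^n\|_{C_\alpha} + c\,\tau\,\|R^n\|$.

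Summing this recursion from $n=0$ (with $e^0 = 0$) gives $\|e^n\|_{C_\alpha} \le c\sum_{l=0}^{n-1}\tau\|R^l\| \le c\, n\tau\, (\tau^2+h^3) \le cT(\tau^2+h^3)$; an application of the discrete Gronwall Lemma \ref{lem:3} handles any residual $\|e^l\|$ terms that cannot be absorbed cleanly. Converting back with the norm equivalence $\|e^n\| \le c\|e^n\|_{C_\alpha}$ gives the claimed bound \ref{eq:3.14}. The main obstacle — and the reason the theorem needs the precise $\alpha$-ranges — is establishing the crucial energy inequality: one must simultaneously exploit that $C_\alpha$ is positive definite (so that $\|\cdot\|_{C_\alpha}$ is a genuine norm, forcing $(p,q)=(1,-1)$ to require $\alpha \ge \frac{1+\sqrt{73}}{6}$) and that $A_\alpha + A_\alpha^{\mathrm T}$ is negative (semi)definite (forcing $\alpha > 1$). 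A subtle point is that $C_\alpha$ and $D_\alpha$ do not commute in general, so one should work with the symmetrized inner product $(u,v)\mapsto h\,u^{\mathrm T} C_\alpha v$ and carefully verify that the cross term involving $(C_\alpha-D_\alpha)^{-1}$ does not spoil sign-definiteness — this is where care is needed in ordering the manipulations, and the cleanest route is probably to pair the error equation directly with $h(e^{n+1}+e^n)^{\mathrm T}(\cdot)$ rather than inverting $C_\alpha - D_\alpha$ explicitly.
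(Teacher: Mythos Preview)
Your proposal is correct and follows essentially the same route as the paper: subtract \ref{eq:3.7} from \ref{eq:3.5}, pair the resulting error equation with $h(e^{n+1}+e^n)^{\mathrm T}$, discard the $D_\alpha$ contribution via the negative semi-definiteness of $A_\alpha+A_\alpha^{\mathrm T}$ (Lemma~\ref{thm:4}), and use the positive definiteness of $C_\alpha$ (Remark~\ref{rem:3}) to control the energy $E^n=h(e^n)^{\mathrm T}C_\alpha e^n=\|e^n\|_{C_\alpha}^2$. The only cosmetic difference is in the finishing step: the paper bounds the cross term $\tau h(e^{n+1}+e^n)^{\mathrm T}\varepsilon^{n+1/2}$ by Young's inequality and then invokes the discrete Gronwall Lemma~\ref{lem:3}, whereas your factor-and-divide trick $\|e^{n+1}\|_{C_\alpha}-\|e^n\|_{C_\alpha}\le c\tau\|R^n\|$ telescopes directly and actually makes Gronwall unnecessary.
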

\begin{proof}
 Denoting $e_i^n=u_i^n-U_i^n$, from formulae \ref{eq:3.5} and \ref{eq:3.7} we have
  \begin{equation}\label{eq:3.15}
    C_{\alpha}(e^{n+1}-e^{n})-\frac{K_1\tau}{2h^{\alpha}}A_{\alpha}(e^{n+1}+e^n)
    -\frac{K_2\tau}{2h^{\alpha}}A_{\alpha}^{\mathrm{T}}(e^{n+1}+e^n)=\tau \varepsilon^{n+1/2},
  \end{equation}
  where
  \begin{equation*}
    e^n=(e_1^n, e_2^n, \cdots,e_{N-1}^n)^{\mathrm{T}},\quad \varepsilon^{n+1/2}=(\varepsilon_1^{n+1/2},\varepsilon_2^{n+1/2},\cdots,\varepsilon_{N-1}^{n+1/2})^{\mathrm{T}}.
  \end{equation*}
  Multiplying \ref{eq:3.15} by $h(e^{n+1}+e^n)^\mathrm{T}$, we obtain that
  \begin{equation}\label{eq:3.16a}
    \begin{split}
      & h(e^{n+1}+e^n)^{\mathrm{T}}C_{\alpha}(e^{n+1}-e^{n})
        -\frac{K_1\tau}{2h^{\alpha-1}}(e^{n+1}+e^n)^{\mathrm{T}}A_{\alpha}(e^{n+1}+e^n)    \\
      & -\frac{K_2\tau}{2h^{\alpha-1}}(e^{n+1}+e^n)^{\mathrm{T}}A_{\alpha}^{\mathrm{T}}
        (e^{n+1}+e^n)=\tau h(e^{n+1}+e^n)^{\mathrm{T}}\varepsilon^{n+1/2}.
    \end{split}
  \end{equation}
  By Lemma \ref{thm:4}, $A_{\alpha}$ and its transpose $A_{\alpha}^\mathrm{T}$ are both negative semi-definite matrices for $1\le \alpha\le2$, thus
  \begin{equation}
    (e^{n+1}+e^n)^{\mathrm{T}}A_{\alpha}(e^{n+1}+e^n)\le 0, \ \ \ \ (e^{n+1}+e^n)^{\mathrm{T}}A_{\alpha}^{\mathrm{T}}(e^{n+1}+e^n)\le 0.
  \end{equation}
  Then \ref{eq:3.16a} leads to
  \begin{equation}\label{eq:3.16b}
    h(e^{n+1}+e^n)^{\mathrm{T}}C_{\alpha}(e^{n+1}-e^{n})\le\tau h(e^{n+1}+e^n)^{\mathrm{T}}\varepsilon^{n+1/2}.
  \end{equation}
  As the matrix $C_{\alpha}$ is symmetric, we derive that
  \begin{equation}
    h(e^{n+1}+e^n)^{\mathrm{T}}C_{\alpha}(e^{n+1}-e^{n})=E^{n+1}-E^n,
  \end{equation}
  where
  \begin{equation}
    E^n=h(e^n)^{\mathrm{T}}C_{\alpha}(e^{n})\ge (1-4~c_{p,q,2}^{\alpha})\|e^n\|^2.
  \end{equation}
 From \ref{eq:2.16} and \ref{eq:2.17}, it yields $E^n\ge \lambda\|e^n\|^2$, where $\lambda=1-4c_{1,-1,2}^{\alpha}>0$ if $\frac{1+\sqrt{73}}{6}<\alpha\le 2$ and $(p,q)=(1,-1)$;  and $\lambda=\frac{23}{72}$ if $1\le \alpha\le 2$ and $(p,q)=(1,0)$. Together with \ref{eq:3.16b}, it yields that
  \begin{equation}
    E^{k+1}-E^k\le \tau h(e^{k+1}+e^k)^{\mathrm{T}}\varepsilon^{k+1/2}
    \le \frac{\tau\lambda}{2}\big(\|e^{k+1}\|^2+\|e^{k}\|^2\big)
    +\frac{\tau}{\lambda}\|\varepsilon^{k+1/2}\|^2.
  \end{equation}
  Summing up for all $0\leq k \leq n-1$, we have
  \begin{equation}
    \begin{split}
      \lambda\|e^n\|^2&\le \tau h(e^{n}+e^{n-1})^{\mathrm{T}}\varepsilon^{n-1/2} +\frac{\tau\lambda}{2}\sum_{k=0}^{n-2}\big(\|e^{k+1}\|^2+\|e^{k}\|^2\big)
      +\frac{\tau}{\lambda}\sum_{k=1}^{n-2}\|\varepsilon^{k+1/2}\|^2          \\
      &\le \frac{\lambda}{2}\|e^{n}\|^2+\frac{\tau^2}{2\lambda}\|\varepsilon^{n-1/2}\|^2
      +\tau\lambda\sum_{k=1}^{n-1}\|e^{k}\|^2
      +\frac{\tau}{\lambda}\sum_{k=1}^{n-1}\|\varepsilon^{k+1/2}\|^2.
    \end{split}
  \end{equation}
   Since $|\varepsilon_i^{k+1/2}|\le \tilde{c}(\tau^2+h^3)$ for any $0\leq k \leq n-1$, then it leads to
  \begin{equation}
    \begin{split}
      \|e^n\|^2&\le 2\tau\sum_{k=1}^{n-1}\|e^{k}\|^2
      +\frac{2\tau}{\lambda^2}\sum_{k=1}^{n-1}\|\varepsilon^{k+1/2}\|^2
      +\frac{\tau^2}{\lambda^2}\|\varepsilon^{n-1/2}\|^2 \\
      &\le 2\tau\sum_{k=1}^{n-1}\|e^{k}\|^2+c(\tau^2+h^3)^2,
    \end{split}
  \end{equation}
  which completes the proof by Lemma \ref{lem:3}.
\end{proof}
\begin{remark}
The truncation error in \ref{eq:3.3} becomes
$\varepsilon_i^{n+1/2}=O(\tau^2+h^4)$ when $\alpha=1,2$ with
$(p,q)=(1,0)$ and $\alpha=2$ with $(p,q)=(1,-1)$, so when taking
$\alpha=1,2$ the compact finite difference schemes for the classical
diffusion equations are recovered and the corresponding error
estimate of the difference scheme \ref{eq:3.7} satisfies
  \begin{equation}
    \|u^n-U^n\|\leq c(\tau^2+h^4), ~~~1\leq n\leq M.
  \end{equation}
\end{remark}
\section{Two Dimensional Space Fractional Diffusion Equation}
Now we consider the following two-sided space fractional diffusion
equation in two dimensions
\begin{equation}\label{eq:4.1}
  \begin{cases}
   \displaystyle \frac{\partial u(x, y, t)}{\partial t}=\Big(K_1^{+}{_a}D_x^{\alpha}u(x, y, t)+K_2^{+}{_x}D_b^{\alpha}u(x, y, t)\Big)\\
    ~~~~~~~~~~~~~~~~~+\Big(K_1^{-}{_c}D_y^{\beta}u(x, y, t)+K_2^{-}{_y}D_d^{\beta}u(x, y, t)\Big)+f(x, y, t),  &   \text{$(x, y, t) \in \Omega\times (0, T]$,} \\
    u(x, y, 0)=u_0(x, y),    & \text{$(x, y)\in \Omega$},\\
    u(x, y, t)=\varphi(x, y, t), &\text{$(x, y, t)\in\partial\Omega\times[0, T]$},
  \end{cases}
\end{equation}
where $\Omega=(a,b)\times(c,d)$, ${_a}D_x^{\alpha},
{_x}D_b^{\alpha}$ and ${_c}D_y^{\beta}, {_y}D_d^{\beta}$ are
Riemann-Liouville fractional derivatives with $1<\alpha, \beta \leq
2$. The diffusion coefficients $K_i^{+}, ~K_i^{-}\geq 0,~i=1,2,$
satisfy $(K_1^{+})^2+(K_2^{+})^2\neq0$ and
$(K_1^{-})^2+(K_2^{-})^2\neq0$. And $\varphi(a,y,t)\equiv0$ if
$K_1^{+}\neq0$; $\varphi(b,y,t)\equiv0$ if $K_1^{-}\neq0$;
$\varphi(x,c,t)\equiv0$ if $K_2^{+}\neq0$; $\varphi(x,d,t)\equiv0$
if $K_2^{-}\neq0$. We assume that \ref{eq:4.1} has a unique and
sufficiently smooth solution.
\subsection{Compact Difference Scheme}
 We partition the domain $\Omega$ into a
uniform mesh with the space stepsizes $h_x=(b-a)/N_x, h_y=(d-c)/N_y$
and the time stepsize $\tau=T/M$, where $N_x, N_y, M$ being positive
integers. And the set of grid points are denoted by $x_i=a+ih_x,
y_j=c+jh_y$ and $t_n=n\tau$ for $0\leq i\leq N_x, 0\leq j\leq N_y$
and $0\leq n\leq M$. Setting $t_{n+1/2}=(t_n+t_{n+1})/2$  for $0\leq
n\leq M-1$, we denote \begin{equation*}
 \begin{split}
  & u_{i, j}^n=u(x_i, y_j, t_n), \ \ u_{i,j}^{n+1/2}=\frac{1}{2}\big(u(x_i, y_j, t_n)+u(x_i, y_j, t_{n+1})\big),  \\ &  f_{i, j}^{n+1/2}=f(x_i, y_j, t_{n+1/2}), \ \ \delta_t u_{i, j}^n=(u_{i, j}^{n+1}-u_{i, j}^n)/\tau.
 \end{split}
\end{equation*}
Discretizing \ref{eq:4.1} by the Crank-Nicolson technique in time
direction leads to
\begin{equation}\label{eq:4.2}
  \begin{split}
    \delta_t u_{i,j}^{n}&=\frac{1}{2}\Big(K_1^{+}(_aD_x^{\alpha}u)_{i,j}^{n+1}+K_2^{+}(_xD_b^{\alpha}u)_{i,j}^{n+1}+K_1^{-}(_cD_y^{\beta}u)_{i,j}^{n+1}+K_2^{-}(_yD_d^{\beta}u)_{i,j}^{n+1}                           \\
    &+K_1^{+}(_aD_x^{\alpha}u)_{i,j}^n+K_2^{+}(_xD_b^{\alpha}u)_{i,j}^n+K_1^{-}(_cD_y^{\beta}u)_{i,j}^n+K_2^{-}(_yD_d^{\beta}u)_{i,j}^n\Big)+f_{i,j}^{n+1/2}+O(\tau^2).
  \end{split}
\end{equation}
In the space discretizations, we introduce the finite difference
operators
\begin{equation}
\mathcal{C}_xu_{i,j}=( 1+c_{p,q,2}^{\alpha}h_x^2\delta_x^2 )u_{i,j}, ~~~\mathcal{C}_yu_{i,j}=( 1+c_{p,q,2}^{\beta}h_y^2\delta_y^2 )u_{i,j},
\end{equation}
and deduce that
\begin{subequations}
\begin{align}
&\mathcal{C}_x\Big({_a}D_x^{\alpha}u_{i,j}\Big)={_L}\mathcal{D}_{h_x,p,q}^{\alpha}u_{i,j}-c_{p,q,3}^{\alpha}h_x^3~_a
D_x^{\alpha+3}u_{i,j}+O(h_x^4),  \label{eq:4.4a}\\
&\mathcal{C}_x\Big({_x}D_b^{\alpha}u_{i,j}\Big)={_R}\mathcal{D}_{h_x,p,q}^{\alpha}u_{i,j}-c_{p,q,3}^{\alpha}h_x^3~_x
D_b^{\alpha+3}u_{i,j}+O(h_x^4),  \label{eq:4.4b}\\
&\mathcal{C}_y\Big({_c}D_y^{\beta}u_{i,j}\Big)={_L}\mathcal{D}_{h_y,p,q}^{\beta}u_{i,j}-c_{p,q,3}^{\beta}h_y^3~
{_c}D_y^{\beta+3}u_{i,j}+O(h_y^4), \label{eq:4.4c}\\
&\mathcal{C}_y\Big({_y}D_d^{\beta}u_{i,j}\Big)={_R}\mathcal{D}_{h_y,p,q}^{\beta}u_{i,j}-c_{p,q,3}^{\beta}h_y^3~
{_y}D_d^{\beta+3}u_{i,j}+O(h_y^4).\label{eq:4.4d}
\end{align}
\end{subequations}
For the simplification of presentation, the same stepsizes are
chosen in the following discussions, and denoted as $h=h_x=h_y$.
Acting $\tau\mathcal{C}_x\mathcal{C}_{y}$ on both sides of
\ref{eq:4.2} and plugging \ref{eq:4.4a}-\ref{eq:4.4d} in it, we have
\begin{equation}\label{eq:4.3}
  \begin{split}
    &\Big(\mathcal{C}_x\mathcal{C}_y-\frac{K_1^{+}\tau}{2}\mathcal{C}_y{_L}\mathcal{D}_{h,p,q}^{\alpha}
        -\frac{K_2^{+}\tau}{2}\mathcal{C}_y{_R}\mathcal{D}_{h,p,q}^{\alpha}
        -\frac{K_1^{-}\tau}{2}\mathcal{C}_x{_L}\mathcal{D}_{h,p,q}^{\beta}
        -\frac{K_2^{-}\tau}{2}\mathcal{C}_x{_R}\mathcal{D}_{h,p,q}^{\beta}\Big)u_{i,j}^{n+1} \\
    =&\Big(\mathcal{C}_x\mathcal{C}_y+\frac{K_1^{+}\tau}{2}\mathcal{C}_y{_L}\mathcal{D}_{h,p,q}^{\alpha}
        +\frac{K_2^{+}\tau}{2}\mathcal{C}_y{_R}\mathcal{D}_{h,p,q}^{\alpha}
        +\frac{K_1^{-}\tau}{2}\mathcal{C}_x{_L}\mathcal{D}_{h,p,q}^{\beta}
        +\frac{K_2^{-}\tau}{2}\mathcal{C}_x{_R}\mathcal{D}_{h,p,q}^{\beta}\Big)u_{i,j}^{n}\\
     & +\tau \mathcal{C}_x\mathcal{C}_yf_{i,j}^{n+1/2}+\tau\varepsilon_{i,j}^{n+1/2},
  \end{split}
\end{equation}
where
\begin{equation}
  \begin{split}
    \varepsilon_{i,j}^{n+1/2}=&-h^3\Big(K_1^{+}c_{p,q,3}^{\alpha}\,{_a}D_x^{\alpha+3}u
     +K_2^{+}c_{p,q,3}^{\alpha}\,{_x}D_b^{\alpha+3}u
     +K_1^{-}c_{p,q,3}^{\beta}\,{_c}D_y^{\beta+3}u  \\
    &+K_2^{-}c_{p,q,3}^{\beta}\,{_y}D_d^{\beta+3}u\Big)_{i,j}^{n+1/2}+O(\tau^2+h^4).
  \end{split}
\end{equation}
And we denote
\begin{equation*}
  \delta_x^\alpha=K_1^{+}{_L}\mathcal{D}_{h,p,q}^{\alpha}+K_2^{+}{_R}\mathcal{D}_{h,p,q}^{\alpha}, \qquad
  \delta_y^\beta=K_1^{-}{_L}\mathcal{D}_{h,p,q}^{\beta}+K_2^{-}{_R}\mathcal{D}_{h,p,q}^{\beta}.
\end{equation*}
Using the Taylor's expansion shows that
\begin{equation}\label{eq:4.4}
 \frac{\tau^2}{4}\delta_x^{\alpha}\delta_y^{\beta}(u_{i,j}^{n+1}-u_{i,j}^{n})=\frac{\tau^3}{4}\Big((K_1^{+}{_a}D_x^{\alpha}+K_2^{+}{_x}D_{b}^{\alpha})(K_1^{-}{_c}D_{y}^{\beta}+K_2^{-}{_y}D_{d}^{\beta})\frac{\partial u}{\partial t}\Big)_{i,j}^{n+1/2}+O(\tau^5+\tau^3h^2).
 \end{equation}
Adding \ref{eq:4.4} to the corresponding sides of \ref{eq:4.3}, then
we can factorize it as
\begin{equation}
  \begin{split}\label{eq:4.6}
    \Big(\mathcal{C}_x-\frac{\tau}{2}\delta_x^\alpha\Big)\Big(\mathcal{C}_y-\frac{\tau}{2}\delta_y^\beta\Big)u_{i,j}^{n+1}
    = \Big(\mathcal{C}_x+\frac{\tau}{2}\delta_x^\alpha\Big)\Big(\mathcal{C}_y+\frac{\tau}{2}\delta_y^\beta\Big)u_{i,j}^n+\tau \mathcal{C}_x\mathcal{C}_yf_{i,j}^{n+1/2}+\tau \hat{\varepsilon}_{i,j}^{n+1/2},
  \end{split}
\end{equation}
where
\begin{equation}
\hat{\varepsilon}_{i,j}^{n+1/2}=\varepsilon_{i,j}^{n+1/2}+\frac{\tau^2}{4}\Big((K_1^{+}{_a}D_x^{\alpha}+K_2^{+}{_x}D_{b}^{\alpha})(K_1^{-}{_c}D_{y}^{\beta}+K_2^{-}{_y}D_{d}^{\beta})\frac{\partial u}{\partial t}\Big)_{i,j}^{n+1/2}+O(\tau^2+\tau^2h^2+h^4).
\end{equation}
By denoting $U_{i,j}^n$ as the numerical approximation to
$u_{i,j}^n$, the compact finite difference scheme for \ref{eq:4.1}
is
\begin{equation}
  \begin{split}\label{eq:4.7}
    & \Big(\mathcal{C}_x-\frac{\tau}{2}\delta_x^\alpha\Big)\Big(\mathcal{C}_y-\frac{\tau}{2}\delta_y^\beta\Big)U_{i,j}^{n+1}
    = \Big(\mathcal{C}_x+\frac{\tau}{2}\delta_x^\alpha\Big)\Big(\mathcal{C}_y+\frac{\tau}{2}\delta_y^\beta\Big)U_{i,j}^n+\tau \mathcal{C}_x\mathcal{C}_yf_{i,j}^{n+1/2}.
  \end{split}
\end{equation}
Introducing the intermediate variable $V_{i,j}^n$, we derive several splitting schemes, such as the compact Peaceman-Rachford ADI scheme:
\begin{subequations}
\label{eq:4.16}
\begin{align}
    &\Big(\mathcal{C}_x-\frac{\tau}{2}\delta_x^\alpha\Big)V_{i,j}^{n}~~~
     =\Big(\mathcal{C}_y+\frac{\tau}{2}\delta_y^\beta\Big)U_{i,j}^{n}
     +\frac{\tau}{2}\mathcal{C}_yf_{i,j}^{n+1/2}, \\
    &\Big(\mathcal{C}_y-\frac{\tau}{2}\delta_y^\beta\Big)U_{i,j}^{n+1}
     =\Big(\mathcal{C}_x+\frac{\tau}{2}\delta_x^\alpha\Big)V_{i,j}^{n}
     +\frac{\tau}{2}\mathcal{C}_yf_{i,j}^{n+1/2},
\end{align}
\end{subequations}
the compact Douglas ADI scheme:
\begin{subequations}\label{eq:4.17}
\begin{align}
    &\Big(\mathcal{C}_x-\frac{\tau}{2}\delta_x^\alpha\Big)V_{i,j}^{n}~~~=
     \Big(\mathcal{C}_x\mathcal{C}_y+\frac{\tau}{2}\mathcal{C}_y\delta_x^\alpha
     +\tau\mathcal{C}_x\delta_y^\beta\Big)U_{i,j}^{n}+\tau \mathcal{C}_x\mathcal{C}_yf_{i,j}^{n+1/2}, \\
    &\Big(\mathcal{C}_y-\frac{\tau}{2}\delta_y^\beta\Big)U_{i,j}^{n+1}
     =V_{i,j}^n-\frac{\tau}{2}\delta_y^\beta U_{i,j}^n,
\end{align}
\end{subequations}
and the compact D'Yakonov ADI scheme:
\begin{subequations}\label{eq:4.18}
\begin{align}
    &\Big(\mathcal{C}_x-\frac{\tau}{2}\delta_x^\alpha\Big)V_{i,j}^{n}~~~
     =\Big(\mathcal{C}_x+\frac{\tau}{2}\delta_x^\alpha\Big)
     \Big(\mathcal{C}_y+\frac{\tau}{2}\delta_y^\beta\Big)U_{i,j}^{n}+\tau \mathcal{C}_x\mathcal{C}_yf_{i,j}^{n+1/2}, \\
    &\Big(\mathcal{C}_y-\frac{\tau}{2}\delta_y^\beta\Big)U_{i,j}^{n+1}=V_{i,j}^n.
\end{align}
\end{subequations}
A simple calculation shows that
\begin{equation}
\label{eq:4.8}
\frac{\tau^3}{4}\delta_x^{\alpha}\delta_y^{\beta}f_{i,j}^{n+1/2}=\frac{\tau^3}{4}(K_1^{+}{_a}D_x^{\alpha}+K_2^{+}{_x}D_{b}^{\alpha})(K_1^{-}{_c}D_{y}^{\beta}+K_2^{-}{_y}D_{d}^{\beta})f_{i,j}^{n+1/2}+O(\tau^3h^2).
\end{equation}
Then from \ref{eq:4.6} and \ref{eq:4.8}, it yields that
\begin{equation}
  \begin{split}\label{eq:4.9}
    \Big(\mathcal{C}_x-\frac{\tau}{2}\delta_x^\alpha\Big)
    \Big(\mathcal{C}_y-\frac{\tau}{2}\delta_y^\beta\Big)u_{i,j}^{n+1}
    = & \Big(\mathcal{C}_x+\frac{\tau}{2}\delta_x^\alpha\Big)
    \Big(\mathcal{C}_y+\frac{\tau}{2}\delta_y^\beta\Big)u_{i,j}^n \\
    & +\tau \mathcal{C}_x\mathcal{C}_yf_{i,j}^{n+1/2}+
    \frac{\tau^3}{4}\delta_x^{\alpha}\delta_y^{\beta}f_{i,j}^{n+1/2}+\tau \tilde{\varepsilon}_{i,j}^{n+1/2}.
  \end{split}
\end{equation}
where
\begin{equation}
\tilde{\varepsilon}_{i,j}^{n+1/2}=\hat{\varepsilon}_{i,j}^{n+1/2}-\frac{\tau^2}{4}(K_1^{+}{_a}D_x^{\alpha}+K_2^{+}{_x}D_{b}^{\alpha})(K_1^{-}{_c}D_{y}^{\beta}+K_2^{-}{_y}D_{d}^{\beta})f_{i,j}^{n+1/2}+O(\tau^2+\tau^2h^2).
\end{equation}
Eliminating the truncating error and denoting $U_{i,j}^n$ as the numerical approximation of $u_{i,j}^n$, we have
\begin{equation}
  \begin{split}\label{eq:4.10}
    \Big(\mathcal{C}_x-\frac{\tau}{2}\delta_x^\alpha\Big)
    \Big(\mathcal{C}_y-\frac{\tau}{2}\delta_y^\beta\Big)U_{i,j}^{n+1}
    = \Big(\mathcal{C}_x+\frac{\tau}{2}\delta_x^\alpha\Big)
    \Big(\mathcal{C}_y+\frac{\tau}{2}\delta_y^\beta\Big)U_{i,j}^n+\tau \mathcal{C}_x\mathcal{C}_yf_{i,j}^{n+1/2}+
    \frac{\tau^3}{4}\delta_x^{\alpha}\delta_y^{\beta}f_{i,j}^{n+1/2}.
  \end{split}
\end{equation}
Introducing the intermediate variable $V_{i,j}^{n}$, we obtain the compact locally one-dimensional (LOD) scheme,
\begin{subequations}\label{eq:4.15}
\begin{align}
    &\Big(\mathcal{C}_x-\frac{\tau}{2}\delta_x^\alpha\Big)V_{i,j}^{n}~~~
      =\Big(\mathcal{C}_x+\frac{\tau}{2}\delta_x^\alpha\Big)U_{i,j}^{n}
      +\frac{\tau}{2}\Big(\mathcal{C}_x+\frac{\tau}{2}\delta_x^\alpha\Big)f_{i,j}^{n+1/2},\\
    &\Big(\mathcal{C}_y-\frac{\tau}{2}\delta_y^\beta\Big)U_{i,j}^{n+1}
      =\Big(\mathcal{C}_y+\frac{\tau}{2}\delta_y^\beta\Big)V_{i,j}^{n}
      +\frac{\tau}{2}\Big(\mathcal{C}_y-\frac{\tau}{2}\delta_y^\beta\Big)f_{i,j}^{n+1/2}.
\end{align}
\end{subequations}
\subsection{Stability and Convergence}
Let
 \begin{equation*}
  V_h=\{v:v=\{v_{i,j}\} \text{ is a grid function in } \Omega_h \text{~and~}v_{i,j}=0 \text{ on } \Gamma_h \},
\end{equation*}
where
\begin{align*}
&\Omega_h=\{(i, j): 1\le i\le N_x-1, 1\le j\le N_y-1\},   \\
&\Gamma_h=\{(i, j): i=0,N_x; 0\le j\le N_y\}\cup\{(i, j): 0\le i\le N_x; j=0, N_y\}.
\end{align*}
For any $v=\{v_{i}\}\in V_h$, we define its pointwise maximum norm
 and discrete $L^2$ norm as
\begin{equation}
  \|v\|_{\infty}=\max\limits_{(i,j)\in\Omega_h} |v_{i,j}|, \quad \|v\|=\sqrt{h^2\sum_{i=1}^{N_x-1}\sum_{j=1}^{N_y-1}v_{i,j}^2}.
\end{equation}
In the following, we first list some properties of Kronecker
products of matrices.
\begin{lemma}[\cite{Laub:05}]\label{lem:4}
  Let $A\in\mathbb{R}^{n\times n}$ have eigenvalues $\{\lambda_i\}_{i=1}^n$, and $B\in\mathbb{R}^{m\times m}$ have eigenvalues $\{\mu_j\}_{j=1}^m$. Then the $mn$ eigenvalues of $A\otimes B$, which represents the kronecker product of matrix $A$ and $B$, are
 \begin{equation*}
    \lambda_1\mu_1,\ldots,\lambda_1\mu_m,\lambda_2\mu_1,\ldots,\lambda_2\mu_m,\ldots,\lambda_n\mu_1,\ldots,\lambda_n\mu_m.
  \end{equation*}
\end{lemma}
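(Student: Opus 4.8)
The plan is to reduce to the triangular case by Schur's unitary triangularization and then exploit the mixed-product rule for Kronecker products. A quick heuristic first: if $A$ and $B$ were diagonalizable, with $Ax=\lambda x$ and $By=\mu y$, then $(A\otimes B)(x\otimes y)=(Ax)\otimes(By)=\lambda\mu\,(x\otimes y)$, so $\lambda\mu$ is an eigenvalue of $A\otimes B$; counting dimensions would finish it. To cover non-diagonalizable $A,B$ one replaces eigenvectors by Schur form. So first I would invoke Schur's theorem over $\mathbb{C}$ to write $A=Q_1R_1Q_1^{*}$ and $B=Q_2R_2Q_2^{*}$, where $Q_1\in\mathbb{C}^{n\times n}$ and $Q_2\in\mathbb{C}^{m\times m}$ are unitary and $R_1,R_2$ are upper triangular with diagonal entries $\lambda_1,\dots,\lambda_n$ and $\mu_1,\dots,\mu_m$, respectively.

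Next I would use the identity $(X\otimes Y)(Z\otimes W)=(XZ)\otimes(YW)$, valid whenever $XZ$ and $YW$ are defined, together with $(X\otimes Y)^{*}=X^{*}\otimes Y^{*}$, to obtain
\[
  A\otimes B=(Q_1R_1Q_1^{*})\otimes(Q_2R_2Q_2^{*})
            =(Q_1\otimes Q_2)(R_1\otimes R_2)(Q_1\otimes Q_2)^{*}.
\]
The matrix $Q_1\otimes Q_2$ is unitary, since $(Q_1\otimes Q_2)(Q_1\otimes Q_2)^{*}=(Q_1Q_1^{*})\otimes(Q_2Q_2^{*})=I_n\otimes I_m=I_{mn}$ by the same mixed-product rule. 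Because similarity by a unitary matrix preserves the spectrum, the eigenvalues of $A\otimes B$ coincide with those of $R_1\otimes R_2$.

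Then I would check that $R_1\otimes R_2$ is itself upper triangular. Viewing $R_1\otimes R_2$ in $n\times n$ block form with $(k,l)$ block equal to $(R_1)_{kl}R_2$, the blocks strictly below the block-diagonal vanish because $R_1$ is upper triangular, and each diagonal block $(R_1)_{kk}R_2$ is upper triangular because $R_2$ is; hence the whole matrix is upper triangular. Its diagonal entries are precisely the numbers $(R_1)_{kk}(R_2)_{ll}=\lambda_k\mu_l$ for $1\le k\le n$ and $1\le l\le m$, listed in the order $\lambda_1\mu_1,\dots,\lambda_1\mu_m,\lambda_2\mu_1,\dots,\lambda_n\mu_m$. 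Since the eigenvalues of a triangular matrix are exactly its diagonal entries, the $mn$ eigenvalues of $A\otimes B$ are the products $\lambda_k\mu_l$, as claimed.

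The only mildly delicate point is the bookkeeping in the last step: verifying that the block-triangular structure of $R_1\otimes R_2$ really does place all products $\lambda_k\mu_l$ on the diagonal, in the stated order. This is a routine index chase rather than a genuine obstacle; everything else is a direct application of Schur's theorem and the elementary algebra of the Kronecker product.
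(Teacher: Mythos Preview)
Your proof is correct: the Schur triangularization of $A$ and $B$, combined with the mixed-product rule $(X\otimes Y)(Z\otimes W)=(XZ)\otimes(YW)$ and the observation that $R_1\otimes R_2$ is upper triangular with diagonal $\lambda_k\mu_l$, is the standard argument and each step is sound. Note, however, that the paper does not supply its own proof of this lemma; it is simply quoted from the cited reference \cite{Laub:05}, so there is no ``paper's proof'' to compare against --- your argument is in fact essentially the one found in that textbook.
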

\begin{lemma}[\cite{Laub:05}]\label{lem:5}
  Let $A\in \mathbb{R}^{m\times n}, B\in \mathbb{R}^{r\times s}, C\in \mathbb{R}^{n\times p}, D\in \mathbb{R}^{s\times t}$. Then
  \begin{equation}
    (A\otimes B)(C\otimes D)=AC\otimes BD~~(\in \mathbb{R}^{mr\times pt}).
  \end{equation}
  Moreover, if $A, B\in \mathbb{R}^{n\times n}$, $I$ is a unit matrix of order $n$, then matrices $I\otimes A$ and $B\otimes I$ commute.
\end{lemma}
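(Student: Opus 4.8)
The plan is to prove both assertions directly from the block definition of the Kronecker product. Recall that for $A=(a_{ij})\in\mathbb{R}^{m\times n}$ and $B\in\mathbb{R}^{r\times s}$, the matrix $A\otimes B\in\mathbb{R}^{mr\times ns}$ is the $m\times n$ array of blocks whose $(i,j)$ block is $a_{ij}B$. First I would verify that the dimensions in the statement are consistent: $A\otimes B\in\mathbb{R}^{mr\times ns}$ and $C\otimes D\in\mathbb{R}^{ns\times pt}$, so the product $(A\otimes B)(C\otimes D)$ is well defined and lies in $\mathbb{R}^{mr\times pt}$, which is exactly the size of $AC\otimes BD$ because $AC\in\mathbb{R}^{m\times p}$ and $BD\in\mathbb{R}^{r\times t}$.

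For the mixed-product identity I would compute $(A\otimes B)(C\otimes D)$ block by block. Treating both factors as block matrices and multiplying, the $(i,k)$ block of the product is $\sum_{j=1}^{n}(a_{ij}B)(c_{jk}D)=\bigl(\sum_{j=1}^{n}a_{ij}c_{jk}\bigr)BD=(AC)_{ik}\,BD$, and by definition this is precisely the $(i,k)$ block of $AC\otimes BD$. Since the two matrices have the same block structure and agree in every block, they are equal, which proves $(A\otimes B)(C\otimes D)=AC\otimes BD$.

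The commutation claim is then an immediate consequence. Taking $A,B\in\mathbb{R}^{n\times n}$ and $I=I_n$, two applications of the identity just proved give $(I\otimes A)(B\otimes I)=(IB)\otimes(AI)=B\otimes A$ and $(B\otimes I)(I\otimes A)=(BI)\otimes(IA)=B\otimes A$; since both products equal $B\otimes A$, the matrices $I\otimes A$ and $B\otimes I$ commute.

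I do not expect a genuine obstacle here: it is a routine block-matrix computation, and the only point requiring care is the index bookkeeping in the block multiplication — making sure the single sum over the shared block index $j$ reproduces the entries of the matrix product $AC$ while the factors $B$ and $D$ simply combine into $BD$ uniformly across all blocks. The eigenvalue statement of Lemma \ref{lem:4} is not needed for this argument.
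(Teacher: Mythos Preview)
Your argument is correct and is the standard route: verify the sizes, compute the $(i,k)$ block of $(A\otimes B)(C\otimes D)$ via block multiplication to obtain $(AC)_{ik}\,BD$, and then read off the commutation of $I\otimes A$ and $B\otimes I$ as two applications of the mixed-product rule. There is nothing to compare here, since the paper does not supply a proof of this lemma; it is quoted without proof from \cite{Laub:05}, so your self-contained verification is entirely appropriate.
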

\begin{lemma}[\cite{Laub:05}]\label{lem:6}
  For all $A$ and $B$, $(A\otimes B)^{\mathrm{T}}=A^{\mathrm{T}}\otimes B^{\mathrm{T}}$ and
  $(A\otimes B)^{-1}=A^{-1}\otimes B^{-1}$ if $A$ and $B$ are invertible.
\end{lemma}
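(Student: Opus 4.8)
The plan is to verify the two identities separately, exploiting the block structure of the Kronecker product for the transpose and invoking the mixed-product rule already recorded in Lemma \ref{lem:5} for the inverse. Both parts are elementary, so the proof will be short.

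For the transpose identity, I would write $A=(a_{ij})$ so that, by definition, $A\otimes B$ is the block matrix whose $(i,j)$ block equals $a_{ij}B$. Transposing a block matrix interchanges the $(i,j)$ and $(j,i)$ blocks and transposes each block individually, so the $(i,j)$ block of $(A\otimes B)^{\mathrm{T}}$ is $(a_{ji}B)^{\mathrm{T}}=a_{ji}B^{\mathrm{T}}$. On the other hand, the $(i,j)$ block of $A^{\mathrm{T}}\otimes B^{\mathrm{T}}$ is $(A^{\mathrm{T}})_{ij}B^{\mathrm{T}}=a_{ji}B^{\mathrm{T}}$. Since the two matrices have the same block sizes and identical blocks in every position, they coincide.

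For the inverse identity, assume $A$ and $B$ are invertible. Applying Lemma \ref{lem:5} twice gives $(A\otimes B)(A^{-1}\otimes B^{-1})=(AA^{-1})\otimes(BB^{-1})=I\otimes I=I$, and likewise $(A^{-1}\otimes B^{-1})(A\otimes B)=I$, where we use that the Kronecker product of identity matrices is again an identity matrix of the appropriate order. Hence $A^{-1}\otimes B^{-1}$ is a two-sided inverse of $A\otimes B$, which proves $(A\otimes B)^{-1}=A^{-1}\otimes B^{-1}$.

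There is no substantial obstacle here: both claims reduce to routine manipulations, the only mild points being to keep the block indexing straight in the transpose computation and to note that $I_m\otimes I_n$ equals the identity of order $mn$ in the inverse computation.
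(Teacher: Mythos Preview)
Your argument is correct and entirely standard. Note, however, that the paper does not supply its own proof of this lemma: it is quoted from \cite{Laub:05} and stated without proof, so there is nothing to compare against. Your block-entry computation for the transpose and your use of the mixed-product property (Lemma~\ref{lem:5}) for the inverse are exactly the textbook derivations one would expect.
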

\begin{lemma}[\cite{Zhang:11}]\label{lem:7}
  Let $A, B$ be two symmetric and positive semi-definite matrices, symbolized $A\geq 0$ and $B\geq 0$. Then $A\otimes B\geq 0$.
\end{lemma}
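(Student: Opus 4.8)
The plan is to combine the spectral characterization of positive semi-definiteness with the eigenvalue formula for Kronecker products already established in Lemma~\ref{lem:4}. First I would observe that $A\otimes B$ is symmetric: by Lemma~\ref{lem:6} we have $(A\otimes B)^{\mathrm{T}}=A^{\mathrm{T}}\otimes B^{\mathrm{T}}$, and since $A=A^{\mathrm{T}}$ and $B=B^{\mathrm{T}}$ this equals $A\otimes B$. Hence it suffices to verify that every eigenvalue of $A\otimes B$ is nonnegative.

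Because $A$ and $B$ are symmetric and positive semi-definite, all of their eigenvalues $\{\lambda_i\}_{i=1}^{n}$ and $\{\mu_j\}_{j=1}^{m}$ are real and nonnegative. By Lemma~\ref{lem:4}, the $mn$ eigenvalues of $A\otimes B$ are precisely the products $\lambda_i\mu_j$, each of which is a product of two nonnegative reals and therefore nonnegative. A real symmetric matrix all of whose eigenvalues are nonnegative is positive semi-definite (it is orthogonally diagonalizable with a nonnegative diagonal), so $A\otimes B\geq 0$, which is the claim.

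Alternatively, one can avoid passing through eigenvalues altogether: writing $A=A_0^{\mathrm{T}}A_0$ and $B=B_0^{\mathrm{T}}B_0$ for suitable real matrices $A_0,B_0$ (for instance via the symmetric square roots), Lemmas~\ref{lem:5} and~\ref{lem:6} yield $A\otimes B=(A_0^{\mathrm{T}}A_0)\otimes(B_0^{\mathrm{T}}B_0)=(A_0\otimes B_0)^{\mathrm{T}}(A_0\otimes B_0)$, which is visibly symmetric and positive semi-definite. Both routes are routine; the only thing needing care is the bookkeeping of which auxiliary lemma (eigenvalue product, transpose of a Kronecker product, product of Kronecker products) is invoked at each step, and I do not anticipate any genuine obstacle.
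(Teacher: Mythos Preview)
Your proposal is correct, but there is nothing in the paper to compare it against: Lemma~\ref{lem:7} is quoted from \cite{Zhang:11} and is stated without proof. Both of your routes---the eigenvalue argument via Lemma~\ref{lem:4} together with the symmetry check from Lemma~\ref{lem:6}, and the factorization $A\otimes B=(A_0\otimes B_0)^{\mathrm{T}}(A_0\otimes B_0)$ via Lemmas~\ref{lem:5} and~\ref{lem:6}---are standard and complete; either one suffices.
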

\begin{theorem}\label{thm:6}
  For the case of $(p, q)=(1, 0)$, the difference schemes \ref{eq:4.7} and \ref{eq:4.10} are unconditionally stable for $1<\alpha,\beta\le2$. And for the case of $(p, q)=(1, -1)$, the difference schemes \ref{eq:4.7} and \ref{eq:4.10} are also unconditionally stable when $\frac{1+\sqrt{73}}{6}\le\alpha,\beta\le2$.
\end{theorem}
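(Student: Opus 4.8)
The plan is to recast both schemes \ref{eq:4.7} and \ref{eq:4.10} in matrix form and to identify their iteration matrix as a Kronecker product of the two one-dimensional iteration matrices already analyzed in Section~3. First I would introduce the direction matrices
\[
  C_x=I_{N_x-1}+c_{p,q,2}^{\alpha}S_{N_x-1},\qquad
  C_y=I_{N_y-1}+c_{p,q,2}^{\beta}S_{N_y-1},
\]
\[
  B_x=\frac{1}{h^{\alpha}}\big(K_1^{+}A_{\alpha}+K_2^{+}A_{\alpha}^{\mathrm{T}}\big),\qquad
  B_y=\frac{1}{h^{\beta}}\big(K_1^{-}A_{\beta}+K_2^{-}A_{\beta}^{\mathrm{T}}\big),
\]
where $A_{\alpha}$ is the $(N_x-1)$-square matrix of \ref{eq:2.9} and $A_{\beta}$ its analogue with $\beta$ in place of $\alpha$ and order $N_y-1$; I would then set $M_x=C_x-\frac{\tau}{2}B_x$, $P_x=C_x+\frac{\tau}{2}B_x$, and likewise $M_y,P_y$. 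Ordering the grid unknowns $U_{i,j}^{n}$ into a single vector with the $x$-index running fastest, the operator $\mathcal{C}_x-\frac{\tau}{2}\delta_x^{\alpha}$ is represented by $I_{N_y-1}\otimes M_x$ and $\mathcal{C}_y-\frac{\tau}{2}\delta_y^{\beta}$ by $M_y\otimes I_{N_x-1}$; by Lemma~\ref{lem:5} these commute, their product is $M_y\otimes M_x$, while $\mathcal{C}_x+\frac{\tau}{2}\delta_x^{\alpha}$ and $\mathcal{C}_y+\frac{\tau}{2}\delta_y^{\beta}$ multiply to $P_y\otimes P_x$. Hence both \ref{eq:4.7} and \ref{eq:4.10} can be written as
\[
  (M_y\otimes M_x)\,U^{n+1}=(P_y\otimes P_x)\,U^{n}+b^{n},
\]
where $b^{n}$ collects the known source and boundary contributions; in particular the extra term $\frac{\tau^{3}}{4}\delta_x^{\alpha}\delta_y^{\beta}f_{i,j}^{n+1/2}$ present in \ref{eq:4.10} enters only through $b^{n}$. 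Using Lemmas~\ref{lem:5} and~\ref{lem:6}, the iteration matrix of either scheme is
\[
  G=(M_y\otimes M_x)^{-1}(P_y\otimes P_x)=\big(M_y^{-1}P_y\big)\otimes\big(M_x^{-1}P_x\big)=:G_y\otimes G_x.
\]

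Next I would observe that $G_x$ is exactly the iteration matrix of the one-dimensional compact scheme \ref{eq:3.8} with $(K_1,K_2)$ replaced by $(K_1^{+},K_2^{+})$, and $G_y$ is the same object with $(K_1^{-},K_2^{-})$ and order $\beta$. The stability proof for \ref{eq:3.8} relies only on two facts: that $C_x$ is symmetric positive definite, which by Remark~\ref{rem:3} holds for $(p,q)=(1,0)$ when $1<\alpha\le2$ and for $(p,q)=(1,-1)$ when $\frac{1+\sqrt{73}}{6}\le\alpha\le2$; and that $A_{\alpha}+A_{\alpha}^{\mathrm{T}}$ is negative definite for $1<\alpha\le2$, which is part of Lemma~\ref{thm:4}. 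Both remain valid here because $K_1^{\pm},K_2^{\pm}\ge0$ with $(K_1^{\pm})^{2}+(K_2^{\pm})^{2}\neq0$, so that $B_x+B_x^{\mathrm{T}}$ and $B_y+B_y^{\mathrm{T}}$ are negative definite. Running that argument verbatim — positive definiteness of $C_x^{-1}$, negative definiteness of $C_x^{-1}(B_x+B_x^{\mathrm{T}})C_x^{-1}$, Lemma~\ref{thm:2}, and the Cayley transform $\lambda\mapsto\frac{1+\lambda}{1-\lambda}$ — yields $\rho(G_x)<1$ and $\rho(G_y)<1$ in the stated ranges of $\alpha$ and $\beta$. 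The restriction $\frac{1+\sqrt{73}}{6}\le\alpha,\beta$ for $(p,q)=(1,-1)$ is precisely what keeps $C_x$ and $C_y$ positive definite.

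Finally, by Lemma~\ref{lem:4} the eigenvalues of $G=G_y\otimes G_x$ are the products $\mu_j\lambda_i$ of those of $G_y$ and $G_x$, so $\rho(G)=\rho(G_y)\,\rho(G_x)<1$; since $b^{n}$ does not affect $G$, schemes \ref{eq:4.7} and \ref{eq:4.10} share this iteration matrix and are therefore unconditionally stable in the stated cases. I expect the only point needing real care to be the bookkeeping that legitimizes the tensor-product factorization, namely verifying that $\mathcal{C}_x-\frac{\tau}{2}\delta_x^{\alpha}$ and $\mathcal{C}_y-\frac{\tau}{2}\delta_y^{\beta}$ act on disjoint mesh directions and hence commute as operators on the full grid (Lemma~\ref{lem:5}), so that the factored left- and right-hand sides of \ref{eq:4.7} and \ref{eq:4.10} are genuine Kronecker products; once this is settled, the two-dimensional stability question reduces word for word to the one-dimensional one.
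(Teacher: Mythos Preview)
Your proposal is correct and follows essentially the same route as the paper: both rewrite the scheme via Kronecker products, use Lemma~\ref{lem:5} for commutation, invoke Remark~\ref{rem:3} and Lemma~\ref{thm:4} for the definiteness of $C_{\alpha},C_{\beta}$ and $A_{\alpha}+A_{\alpha}^{\mathrm T}$, and then apply the Lyapunov/Cayley argument of Lemma~\ref{thm:2} to conclude the spectral radius is below one. The only cosmetic difference is that the paper carries out the Lyapunov step on the full-grid matrices $C_x=I_y\otimes C_{\alpha}$, $\mathcal{D}_x$, etc., and separates the iteration as a commuting product $\big((C_y-\mathcal{D}_y)^{-1}(C_y+\mathcal{D}_y)\big)^n\big((C_x-\mathcal{D}_x)^{-1}(C_x+\mathcal{D}_x)\big)^n$, whereas you factor $G=G_y\otimes G_x$ at the small-matrix level and invoke Lemma~\ref{lem:4} once at the end; the two formulations are equivalent since $(I_y\otimes G_x)(G_y\otimes I_x)=G_y\otimes G_x$.
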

\begin{proof}
  We express grid function $U_{i,j}^{n}$ in the vector form as
  \begin{align*}
    & U^{n}=(u_{1,1}^{n},u_{2,1}^{n},\cdots,u_{N_x-1,1}^{n},u_{1,2}^{n},u_{2,2}^{n},
    \cdots,u_{N_x-1,2}^{n},\cdots,u_{1,N_y-1}^{n},u_{2,N_y-1}^n,
    \cdots,u_{N_x-1,N_y-1}^{n})^{\mathrm{T}}, 
  \end{align*}
  and denote
  \begin{align}
    &C_x=I_y\otimes C_{\alpha}, ~~~~ C_y=C_{\beta}\otimes I_x,   \label{eq:4.13a}\\
    &\mathcal{D}_x=\frac{K_1^{+}\tau}{2h^{\alpha}}I_y\otimes A_{\alpha}+\frac{K_2^{+}\tau}{2h^{\alpha}}I_y\otimes A_{\alpha}^{\mathrm{T}},  ~~~~
    \mathcal{D}_y=\frac{K_1^{-}\tau}{2h^{\beta}}A_{\beta}\otimes I_x+\frac{K_2^{-}\tau}{2h^{\beta}}A_{\beta}^{\mathrm{T}}\otimes I_x, \label{eq:4.13b}
  \end{align}
  where the symbol $\otimes$ denotes the Kronecker product, $I_x$ and $I_y$ are unit matrices of $(N_x-1)$ and $(N_y-1)$ squares, respectively, and the matrices $A_{\alpha}$ and $A_{\beta}$ are defined in \ref{eq:2.9} corresponding to $\alpha$ and $\beta$, $C_{\alpha}$ and $C_{\beta}$ are given in \ref{eq:2.20} with coefficients $\alpha$ and $\beta$. Therefore, denoting the disturbances of $U^{n+1}$ and $U^{n}$ by $\delta U^{n+1}$ and $\delta U^n$, respectively, we have from \ref{eq:4.7} and \ref{eq:4.10}  that
  \begin{equation}\label{eq:4.11}
    \delta U^{n+1}=\big(C_y-\mathcal{D}_y\big)^{-1}\big(C_x-\mathcal{D}_x\big)^{-1}
    \big(C_x+\mathcal{D}_x\big)\big(C_y+\mathcal{D}_y\big)\delta U^{n}.
  \end{equation}
  Using Lemma \ref{lem:5}, we can check that $C_x$ and $\mathcal{D}_x$ commute with $C_y$ and $\mathcal{D}_y$, which deduces that $(C_y-\mathcal{D}_y)^{-1}$ and $(C_y+\mathcal{D}_y)$ commute with $(C_x-\mathcal{D}_x)^{-1}$ and $(C_x+\mathcal{D}_x)$. Then it obtains from \ref{eq:4.11} that
  \begin{equation}
    \delta U^{n}=\Big(\big(C_y-\mathcal{D}_y\big)^{-1}\big(C_y+\mathcal{D}_y\big)\Big)^n
    \Big(\big(C_x-\mathcal{D}_x\big)^{-1}\big( C_x+\mathcal{D}_x\big)\Big)^n\delta U^{0}.
  \end{equation}
 From Remark \ref{rem:3}, Lemma \ref{lem:4} and Lemma \ref{lem:5}, we have that $C_x$ and $C_y$ are symmetric and
 positive definite matrices in the cases of $(p, q)=(1, 0)$ with $1<\alpha, \beta \le 2$ and $(p, q)=(1, -1)$
 with $\frac{1+\sqrt{73}}{6}\le\alpha, \beta\le 2$, which yields that $C_x^{-1}$ and $C_y^{-1}$ are also symmetric and positive definite.
 On the other hand, Lemma \ref{thm:4} and \ref{lem:6} indicate that the eigenvalues of $\frac{A_{\alpha}+A_{\alpha}^{\mathrm{T}}}{2}$ and $\frac{A_{\beta}+A_{\beta}^{\mathrm{T}}}{2}$ are all negative when $1<\alpha,\beta\le2$, then employing Lemma \ref{lem:4}, we obtain that $(\mathcal{D}_x+\mathcal{D}_x^{\mathrm{T}})$ and $(\mathcal{D}_y+\mathcal{D}_y^{\mathrm{T}})$ are both symmetric and negative definite matrices. Then it yields that $v^{\mathrm{T}}(\mathcal{D}_x+\mathcal{D}_x^{\mathrm{T}})v<0$ and $v^{\mathrm{T}}(\mathcal{D}_y+\mathcal{D}_y^{\mathrm{T}})v<0$ hold for any non-zero vector $v\in \mathbb{R}^{(N_x-1)(N_y-1)}$, and
  \begin{equation}
    v^{\mathrm{T}}\Big((C_\gamma^{-1}\mathcal{D}_\gamma)C_\gamma^{-1}
    +C_\gamma^{-1}(C_\gamma^{-1}\mathcal{D}_\gamma)^{\mathrm{T}}  \Big)v
    =v^{\mathrm{T}}C_\gamma^{-1}(\mathcal{D}_\gamma
    +\mathcal{D}_\gamma^{\mathrm{T}})C_\gamma^{-1}v < 0,  ~~~\gamma=x,y,
  \end{equation}
  which means that the matrix $(C_\gamma^{-1}\mathcal{D}_\gamma)C_\gamma^{-1}+C_\gamma^{-1} (C_\gamma^{-1}\mathcal{D}_\gamma)^{\mathrm{T}}$
   for $\gamma=x, y$ are symmetric and negative definite matrices,
   then it implies from Lemma \ref{thm:2} that the real parts of all the eigenvalues $\{\lambda_\gamma\}$ of $C_\gamma^{-1}\mathcal{D}_\gamma$ for $\gamma=x, y$ are negative, and $|\frac{1+\lambda_\gamma}{1-\lambda_\gamma}|<1$. Additionally, $\lambda_\gamma$ is an eigenvalue of $C_\gamma^{-1}\mathcal{D}_\gamma$ if and only if $\frac{1-\lambda_\gamma}{1+\lambda_\gamma}$ is an eigenvalue of $(I-C_\gamma^{-1}\mathcal{D}_\gamma)^{-1}(I+C_\gamma^{-1}\mathcal{D}_\gamma)$, thus the spectral radius of each matrix is less than 1, which concludes that $\big((I-C_x^{-1}\mathcal{D}_x)^{-1}(I+C_x^{-1}\mathcal{D}_x)\big)^n$ and $\big((I-C_y^{-1}\mathcal{D}_y)^{-1}(I+C_y^{-1}\mathcal{D}_y)\big)^n$ converge to zero matrix, therefore, the difference scheme \ref{eq:4.7} and \ref{eq:4.10} are stable.
\end{proof}
\begin{lemma}[\cite{Zhang:11}]\label{lem:9}
  Let $A$ be an $n$-square symmetric and positive semi-definite matrix. Then there is a unique $n$-square symmetric and positive semi-definite matrix $B$ such that $B^2=A$. Such a matrix $B$ is called the square root of $A$, denoted by $A^{1/2}$.
\end{lemma}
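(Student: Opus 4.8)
The plan is to construct $B$ explicitly from the spectral decomposition of $A$ and then to prove uniqueness by a simultaneous–diagonalization argument. Since $A$ is symmetric, the spectral theorem gives an orthogonal matrix $Q$ and a diagonal matrix $\Lambda=\mathrm{diag}(\lambda_1,\dots,\lambda_n)$ with $A=Q\Lambda Q^{\mathrm{T}}$, and positive semi-definiteness of $A$ forces $\lambda_i\ge0$ for all $i$. I would then set $B=Q\,\mathrm{diag}(\sqrt{\lambda_1},\dots,\sqrt{\lambda_n})\,Q^{\mathrm{T}}$ and verify directly that $B^{\mathrm{T}}=B$, that $B^2=Q\,\mathrm{diag}(\lambda_1,\dots,\lambda_n)\,Q^{\mathrm{T}}=A$, and that $v^{\mathrm{T}}Bv=(Q^{\mathrm{T}}v)^{\mathrm{T}}\mathrm{diag}(\sqrt{\lambda_i})(Q^{\mathrm{T}}v)\ge0$ for every $v$, so $B$ is a symmetric positive semi-definite square root of $A$. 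This settles existence.

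For uniqueness, suppose $C$ is any symmetric positive semi-definite matrix with $C^2=A$. The crucial point is that $B$ is a polynomial in $A$: by Lagrange interpolation on the distinct eigenvalues of $A$, choose a polynomial $p$ with $p(\lambda_i)=\sqrt{\lambda_i}$ for each eigenvalue $\lambda_i$, so that $p(A)=Q\,\mathrm{diag}(p(\lambda_i))\,Q^{\mathrm{T}}=B$. Because $C$ commutes with $A=C^2$, it commutes with every polynomial in $A$, hence $CB=BC$. Two commuting symmetric matrices are simultaneously orthogonally diagonalizable, so there is an orthonormal basis $\{v_k\}$ with $Bv_k=\beta_k v_k$ and $Cv_k=\gamma_k v_k$, where $\beta_k,\gamma_k\ge0$ by positive semi-definiteness of $B$ and $C$. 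Applying $B^2=A=C^2$ to $v_k$ yields $\beta_k^2=\gamma_k^2$, and since both numbers are nonnegative, $\beta_k=\gamma_k$ for all $k$; thus $B$ and $C$ agree on a basis, so $B=C$.

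The step I expect to be the main obstacle is the commutation $CB=BC$ in the uniqueness argument: without the representation $B=p(A)$ one cannot immediately invoke simultaneous diagonalization, so it is worth isolating the interpolation-polynomial construction and checking $p(A)=B$ carefully (noting that $p$ is evaluated only at the finitely many \emph{distinct} eigenvalues of $A$, which is what makes the interpolation well posed). Everything else — the spectral theorem, the direct verification of the three properties of $B$, and the final eigenvalue-by-eigenvalue comparison using uniqueness of nonnegative real square roots — is routine once this commutation is established.
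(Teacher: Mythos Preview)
Your argument is correct and is the standard spectral-decomposition proof of this classical result. Note, however, that the paper does not actually prove this lemma: it is stated without proof and attributed to the matrix-theory text \cite{Zhang:11}, so there is no in-paper argument to compare against; your construction via $B=Q\,\mathrm{diag}(\sqrt{\lambda_i})\,Q^{\mathrm{T}}$ and the uniqueness step using $B=p(A)$ to force $CB=BC$ (hence simultaneous diagonalization and $\beta_k=\gamma_k$) is exactly the textbook route and needs no revision.
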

\begin{theorem}\label{thm:7}
Let $u_{i,j}^{n}$ be the exact solution of \ref{eq:4.1}, and
$U_{i,j}^{n}$ be the solution of the difference schemes \ref{eq:4.7}
or \ref{eq:4.10}, then in the cases of $(p, q)=(1, 0)$ with
$1<\alpha, \beta<2$ and $(p, q)=(1, -1)$ with
$\frac{1+\sqrt{73}}{6}<\alpha, \beta<2$, we have
  \begin{equation}\label{eq:4.27}
    \|u^n-U^n\|\leq c(\tau^2+h^3), ~~1\le n\le M,
  \end{equation}
  where c denotes a positive constant and $\|\cdot\|$ stands for the discrete $L^2$-norm.
\end{theorem}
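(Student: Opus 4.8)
The plan is to turn the stability analysis of Theorem~\ref{thm:6} into a quantitative energy estimate in a weighted discrete norm. Set $e^{n}=u^{n}-U^{n}$; since the initial and boundary data are reproduced exactly by the scheme, $e^{0}=0$ and $e^{n}$ vanishes on $\Gamma_{h}$. Subtracting the scheme \ref{eq:4.7} from the consistency identity \ref{eq:4.6} (respectively \ref{eq:4.10} from \ref{eq:4.9}), all terms involving $f$ as well as the boundary contributions cancel, and in the vector notation of the proof of Theorem~\ref{thm:6} the error satisfies
\begin{equation*}
  (C_x-\mathcal{D}_x)(C_y-\mathcal{D}_y)\,e^{n+1}=(C_x+\mathcal{D}_x)(C_y+\mathcal{D}_y)\,e^{n}+\tau R^{n+1/2},
\end{equation*}
where $R^{n+1/2}$ is the grid vector of $\hat\varepsilon_{i,j}^{n+1/2}$ (resp. $\tilde\varepsilon_{i,j}^{n+1/2}$). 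By the assumed smoothness of $u$ together with the explicit forms of $\hat\varepsilon$ and $\tilde\varepsilon$, each component is $O(\tau^{2}+h^{3})$ on the stated open parameter ranges, so $\|R^{n+1/2}\|\le c(\tau^{2}+h^{3})$ uniformly in $n$.

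Next I would carry over the commuting Kronecker structure used in the proof of Theorem~\ref{thm:6}. Put $C=C_xC_y=C_{\beta}\otimes C_{\alpha}$; by Remark~\ref{rem:3} and Lemma~\ref{lem:4} this matrix is symmetric positive definite with $\lambda_{\min}(C)=\lambda_{\min}(C_{\alpha})\lambda_{\min}(C_{\beta})$ bounded below by a constant independent of $N_x,N_y$, hence the weighted norm $\|v\|_{C}^{2}:=h^{2}\,v^{\mathrm{T}}Cv$ is equivalent to the discrete $L^{2}$ norm uniformly in the mesh. With $P_{\gamma}=C_{\gamma}^{-1}\mathcal{D}_{\gamma}$ and the Cayley transforms $Q_{\gamma}=(I-P_{\gamma})^{-1}(I+P_{\gamma})$ for $\gamma=x,y$ (well defined and mutually commuting, exactly as in the stability proof), the error identity rewrites, using commutativity, as
\begin{equation*}
  e^{n+1}=Q_xQ_y\,e^{n}+\tau\,L^{-1}R^{n+1/2},\qquad L=(C_x-\mathcal{D}_x)(C_y-\mathcal{D}_y).
\end{equation*}

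The decisive step is to show that $Q_x$ and $Q_y$ are each non-expansive in $\|\cdot\|_{C}$. Using that $C_y$ commutes with $C_x^{-1}$, $\mathcal{D}_x$ and $\mathcal{D}_x^{\mathrm{T}}$ (and the symmetric statement under $x\leftrightarrow y$), a direct computation gives $CP_x+P_x^{\mathrm{T}}C=C_y(\mathcal{D}_x+\mathcal{D}_x^{\mathrm{T}})$ and $CP_y+P_y^{\mathrm{T}}C=C_x(\mathcal{D}_y+\mathcal{D}_y^{\mathrm{T}})$. By Lemma~\ref{thm:4} together with Lemmas~\ref{lem:4}--\ref{lem:6}, the matrix $\mathcal{D}_{\gamma}+\mathcal{D}_{\gamma}^{\mathrm{T}}$ is symmetric negative definite for $1<\alpha,\beta\le2$, and since it commutes with the symmetric positive definite factor $C_{\gamma'}$ ($\gamma'\neq\gamma$), the products above are symmetric negative definite. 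For $w=Q_{\gamma}v$ one has $w-v=P_{\gamma}(w+v)$, and, using $C^{\mathrm{T}}=C$ and discarding the antisymmetric part of the quadratic form,
\begin{equation*}
  \|w\|_{C}^{2}-\|v\|_{C}^{2}=h^{2}(w+v)^{\mathrm{T}}CP_{\gamma}(w+v)=\tfrac{h^{2}}{2}(w+v)^{\mathrm{T}}\big(CP_{\gamma}+P_{\gamma}^{\mathrm{T}}C\big)(w+v)\le0 .
\end{equation*}
Since $Q_x$ and $Q_y$ commute, this yields $\|Q_xQ_y\,e^{n}\|_{C}\le\|Q_y\,e^{n}\|_{C}\le\|e^{n}\|_{C}$.

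Finally, for the source term, negative semidefiniteness of $\mathcal{D}_{\gamma}+\mathcal{D}_{\gamma}^{\mathrm{T}}$ gives $v^{\mathrm{T}}(C_{\gamma}-\mathcal{D}_{\gamma})v\ge v^{\mathrm{T}}C_{\gamma}v\ge\lambda_{\min}(C_{\gamma})\,v^{\mathrm{T}}v$, so $\|(C_{\gamma}-\mathcal{D}_{\gamma})^{-1}\|\le\lambda_{\min}(C_{\gamma})^{-1}$ in the operator norm induced by $\|\cdot\|$, and therefore $\|L^{-1}R^{n+1/2}\|_{C}\le c_{1}\,c(\tau^{2}+h^{3})$ with $c_{1}$ uniform in the mesh. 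Collecting the estimates, $\|e^{n+1}\|_{C}\le\|e^{n}\|_{C}+\tau\,c_{1}c(\tau^{2}+h^{3})$, and summing over $n$ with $e^{0}=0$ gives $\|e^{m}\|_{C}\le m\tau\,c_{1}c(\tau^{2}+h^{3})\le Tc_{1}c(\tau^{2}+h^{3})$ for $1\le m\le M$; passing back via $\|e^{m}\|\le\lambda_{\min}(C)^{-1/2}\|e^{m}\|_{C}$ yields \ref{eq:4.27}. The main obstacle I anticipate is precisely this non-expansiveness of $Q_x,Q_y$: one must identify the single weight $C=C_xC_y$ in which \emph{both} Cayley transforms contract simultaneously and verify that $CP_{\gamma}+P_{\gamma}^{\mathrm{T}}C$ is negative semidefinite through the Kronecker commutation identities; everything else is routine, and — in contrast with the one-dimensional proof — the discrete Gronwall inequality of Lemma~\ref{lem:3} is not needed here, the truncation errors accumulating only linearly in $n$.
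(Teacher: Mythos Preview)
Your argument is correct and reaches \ref{eq:4.27}, but it is organized differently from the paper's own proof. The paper premultiplies the error identity by $(C_x^{-1})^{1/2}(C_y^{-1})^{1/2}$, then expands the two quadratic forms
\[
\big((C_x^{-1})^{1/2}(C_y^{-1})^{1/2}(C_x\mp\mathcal{D}_x)(C_y\mp\mathcal{D}_y)\big)^{\mathrm{T}}
\big((C_x^{-1})^{1/2}(C_y^{-1})^{1/2}(C_x\mp\mathcal{D}_x)(C_y\mp\mathcal{D}_y)\big)
\]
and compares them to a common middle expression $(C_y+\mathcal{D}_y^{\mathrm{T}}C_y^{-1}\mathcal{D}_y)(C_x+\mathcal{D}_x^{\mathrm{T}}C_x^{-1}\mathcal{D}_x)+(\mathcal{D}_y^{\mathrm{T}}+\mathcal{D}_y)(\mathcal{D}_x^{\mathrm{T}}+\mathcal{D}_x)$, which defines the energy $E^n$; monotonicity $E^{k+1}\le E^k+\tau\|(C_xC_y)^{-1/2}\mathcal{E}^{k+1/2}\|$ is then read off from these two matrix inequalities. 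You instead pass to the Cayley transforms $Q_\gamma=(I-P_\gamma)^{-1}(I+P_\gamma)$ and prove directly that each $Q_\gamma$ is a contraction in the single weighted norm $\|\cdot\|_C$ with $C=C_\beta\otimes C_\alpha$, via the identity $CP_\gamma+P_\gamma^{\mathrm{T}}C=C_{\gamma'}(\mathcal{D}_\gamma+\mathcal{D}_\gamma^{\mathrm{T}})\le0$. Both proofs rest on exactly the same three ingredients (positive definiteness of $C_\alpha,C_\beta$, negative definiteness of $\mathcal{D}_\gamma+\mathcal{D}_\gamma^{\mathrm{T}}$, and the Kronecker commutation), and neither needs Lemma~\ref{lem:3}. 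Your formulation is shorter and avoids introducing the auxiliary $\mathcal{D}_\gamma^{\mathrm{T}}C_\gamma^{-1}\mathcal{D}_\gamma$ terms; the paper's expansion, on the other hand, makes the two-sided comparison with $E^n$ explicit at the matrix level. The one step you only sketch, the bound $\|L^{-1}R^{n+1/2}\|_C\le c_1(\tau^2+h^3)$, goes through because $\lambda_{\max}(C_\alpha),\lambda_{\max}(C_\beta)\le1$ on the stated parameter ranges (since $c_{p,q,2}^{\alpha},c_{p,q,2}^{\beta}>0$ there), so $\|\cdot\|_C\le\|\cdot\|$, and your coercivity estimate $\|(C_\gamma-\mathcal{D}_\gamma)^{-1}\|\le\lambda_{\min}(C_\gamma)^{-1}$ then suffices.
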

\begin{proof}
  Let $e_{i,j}^{n}=u_{i,j}^{n}-U_{i,j}^{n}$, subtracting \ref{eq:4.6} from
  \ref{eq:4.7} leads to
  \begin{equation}\label{eq:4.12}
      \big(C_x-\mathcal{D}_x\big)\big(C_y-\mathcal{D}_y\big)e^{n+1}
      =\big(C_x+\mathcal{D}_x\big)\big(C_y+\mathcal{D}_y\big)e^{n}
      +\tau \mathcal{E}^{n+1/2},
  \end{equation}
  where
  \begin{align*}
    &e=(e_{1,1},e_{2,1},\cdots,e_{N_x-1,1},e_{1,2},e_{2,2},\cdots,e_{N_x-1,2},\cdots,
        e_{1,N_y-1},e_{2,N_y-1},\cdots,e_{N_x-1,N_y-1})^{\mathrm{T}},    \\
    &\mathcal{E}=(\hat{\varepsilon}_{1,1},\hat{\varepsilon}_{2,1},\cdots,
        \hat{\varepsilon}_{N_x-1,1},\hat{\varepsilon}_{1,2},\hat{\varepsilon}_{2,2},\cdots,
        \hat{\varepsilon}_{N_x-1,2},\cdots,\hat{\varepsilon}_{1,N_y-1},
        \hat{\varepsilon}_{2,N_y-1},\cdots,\hat{\varepsilon}_{N_x-1,N_y-1})^{\mathrm{T}},
  \end{align*}
and the matrices $C_x, C_y$ and $\mathcal{D}_x, \mathcal{D}_y$ are
given by \ref{eq:4.13a} and \ref{eq:4.13b}, respectively.

As stated in Theorem \ref{thm:6}, under the cases of $(p, q)=(1, 0)$
with $1< \alpha, \beta\le 2$ and $(p, q)=(1, -1)$ with
$\frac{1+\sqrt{73}}{6}<\alpha, \beta\le 2$, the matrices $C_\alpha$
and $C_\beta$ and their inverse are symmetric and positive definite.
And from Lemma \ref{lem:6} and \ref{lem:9}, we know that
$(C_x^{-1})^{1/2}=I_y\otimes (C_\alpha^{-1})^{1/2}$ and
$(C_y^{-1})^{1/2}=(C_\beta^{-1})^{1/2}\otimes I_x$ uniquely exist
and are symmetric and positive semi-definite matrices. Then
multiplying \ref{eq:4.12} by $(C_x^{-1})^{1/2}(C_y^{-1})^{1/2}$, and
making the discrete $L^2$-norm on both sides, we have
  \begin{equation}\label{eq:4.29}
    \begin{split}
      &\|(C_x^{-1})^{1/2}(C_y^{-1})^{1/2}(C_x-\mathcal{D}_x)(C_y-\mathcal{D}_y)e^{n+1}\| \\
      &~~~~\le\|(C_x^{-1})^{1/2}(C_y^{-1})^{1/2}(C_x+\mathcal{D}_x)(C_y+\mathcal{D}_y)e^{n}\|
      +\tau\|(C_x^{-1})^{1/2}(C_y^{-1})^{1/2}\mathcal{E}^{n+1/2}\|.
    \end{split}
  \end{equation}
Simple calculations show that $(C_y-\mathcal{D}_y)$ commutes with
$(C_x-\mathcal{D}_x), (C_x^{-1})^{1/2},
(C_x-\mathcal{D}_x^\mathrm{T})$; $(C_y+\mathcal{D}_y)$ commutes with
$(C_x+\mathcal{D}_x), (C_x^{-1})^{1/2},
(C_x+\mathcal{D}_x^\mathrm{T})$; and $(C_y^{-1})^{1/2}$ commutes
with $(C_x^{-1})^{1/2}, (C_x\pm\mathcal{D}_x^\mathrm{T})$. From
Lemma \ref{thm:4} and \ref{lem:7}, we have that
$\mathcal{D}_\gamma+\mathcal{D}_\gamma^{\mathrm{T}} ~(\gamma=x, y)$
are symmetric and negative definite matrices. Thus, using Lemma
\ref{lem:5} and Lemma \ref{lem:7}, we obtain that
  \begin{equation}\label{eq:4.14a}
    \begin{split}
      &\Big((C_x^{-1})^{1/2}(C_y^{-1})^{1/2}(C_x-\mathcal{D}_x)(C_y-\mathcal{D}_y)\Big)^{\mathrm{T}}
      \Big((C_x^{-1})^{1/2}(C_y^{-1})^{1/2}(C_x-\mathcal{D}_x)(C_y-\mathcal{D}_y)\Big)\\
      &~~~~~~=\big(C_y-\mathcal{D}_y^{\mathrm{T}}-\mathcal{D}_y+\mathcal{D}_y^{\mathrm{T}}
      C_y^{-1}\mathcal{D}_y\big)\big(C_x-\mathcal{D}_x^{\mathrm{T}}-\mathcal{D}_x
      +\mathcal{D}_x^{\mathrm{T}}C_x^{-1}\mathcal{D}_x\big)\\
      &~~~~~~\ge\big(C_y+\mathcal{D}_y^{\mathrm{T}}C_y^{-1}\mathcal{D}_y\big)
      \big(C_x+\mathcal{D}_x^{\mathrm{T}}C_x^{-1}\mathcal{D}_x\big)
      +\big(\mathcal{D}_y^{\mathrm{T}}+\mathcal{D}_y\big)
      \big(\mathcal{D}_x^{\mathrm{T}}+\mathcal{D}_x\big),
    \end{split}
  \end{equation}
  and
  \begin{equation}\label{eq:4.14b}
    \begin{split}
      &\Big((C_x^{-1})^{1/2}(C_y^{-1})^{1/2}(C_x+\mathcal{D}_x)(C_y+\mathcal{D}_y)\Big)^{\mathrm{T}}
      \Big((C_x^{-1})^{1/2}(C_y^{-1})^{1/2}(C_x+\mathcal{D}_x)(C_y+\mathcal{D}_y)\Big)\\
      &~~~~~~=\big(C_y+\mathcal{D}_y^{\mathrm{T}}+\mathcal{D}_y+\mathcal{D}_y^{\mathrm{T}}
      C_y^{-1}\mathcal{D}_y\big)\big(C_x+\mathcal{D}_x^{\mathrm{T}}+\mathcal{D}_x
      +\mathcal{D}_x^{\mathrm{T}}C_x^{-1}\mathcal{D}_x\big)\\
      &~~~~~~\le\big(C_y+\mathcal{D}_y^{\mathrm{T}}C_y^{-1}\mathcal{D}_y\big)
      \big(C_x+\mathcal{D}_x^{\mathrm{T}}C_x^{-1}\mathcal{D}_x\big)
      +\big(\mathcal{D}_y^{\mathrm{T}}+\mathcal{D}_y\big)
      \big(\mathcal{D}_x^{\mathrm{T}}+\mathcal{D}_x\big),
    \end{split}
  \end{equation}
  where the matrices $A\ge B$ means that $(A-B)$ is positive semi-definite.
  And define
  \begin{equation}\label{eq:4.19}
    E^{n}=\sqrt{h^2(e^n)^{\mathrm{T}}\Big(\big(C_y+\mathcal{D}_y^{\mathrm{T}}C_y^{-1}\mathcal{D}_y\big)
      \big(C_x+\mathcal{D}_x^{\mathrm{T}}C_x^{-1}\mathcal{D}_x\big)
    +\big(\mathcal{D}_y^{\mathrm{T}}+\mathcal{D}_y\big)
      \big(\mathcal{D}_x^{\mathrm{T}}+\mathcal{D}_x\big)\Big)(e^n)},
  \end{equation}
it concludes from \ref{eq:4.13a}, \ref{eq:4.13b}, Lemma \ref{lem:5},
 Lemma \ref{lem:7}, and Lemma \ref{thm:4} that the matrices
$C_y\mathcal{D}_x^{\mathrm{T}}C_x^{-1}\mathcal{D}_x$,
$C_x\mathcal{D}_y^{\mathrm{T}}C_y^{-1}\mathcal{D}_y$,
$\mathcal{D}_y^{\mathrm{T}}C_y^{-1}\mathcal{D}_y
  \mathcal{D}_x^{\mathrm{T}}C_x^{-1}\mathcal{D}_x$
  and $\big(\mathcal{D}_y^{\mathrm{T}}+\mathcal{D}_y\big)
  \big(\mathcal{D}_x^{\mathrm{T}}+\mathcal{D}_x\big)$ are all symmetric and positive definite, which follows that
  \begin{equation}\label{eq:4.22}
    E^n\ge\sqrt{h^2(e^n)^\mathrm{T}C_xC_y(e^n)}
    =\sqrt{h^2(e^n)^\mathrm{T}\big(C_\beta\otimes C_\alpha\big)(e^n)}\ge\sqrt{\lambda_{\min}(C_\alpha)\lambda_{\min}(C_\beta)}\|e^n\|,
  \end{equation}
  where $\lambda_{\min}(C_\alpha)$ and $\lambda_{\min}(C_\beta)$ are the minimum eigenvalue of matrix $C_\alpha$ and $C_\beta$, respectively. As stated in Remark \ref{rem:3}, $\lambda_{\min}(C_\alpha)>1-4c_{1,-1,2}^{\alpha}>0,~\lambda_{\min}(C_\beta)>1-4c_{1,-1,2}^{\beta}>0$ if $\frac{1+\sqrt{73}}{6}<\alpha,\beta\le 2$ and $(p,q)=(1,-1)$; $\lambda_{\min}(C_\alpha),\lambda_{\min}(C_\beta)>\frac{23}{72}$ if $1\le \alpha,\beta\le 2$ and $(p,q)=(1,0)$.
  Together with \ref{eq:4.14a} and \ref{eq:4.14b}, then \ref{eq:4.29} becomes as
  \begin{equation}\label{eq:4.20}
    E^{k+1}-E^{k}\le\tau\|(C_x^{-1})^{1/2}(C_y^{-1})^{1/2}\mathcal{E}^{k+1/2}\|.
  \end{equation}
  From the Rayleigh-Ritz Theorem (see Theorem 8.8 in \cite{Zhang:11}) and Lemma \ref{lem:4}, we have for $k=0,\cdots, n-1$ that
  \begin{equation}\label{eq:4.33}
    \begin{split}
      \|(C_x^{-1})^{1/2}(C_y^{-1})^{1/2}\mathcal{E}^{k+1/2}\|
      &=\sqrt{h^2(\mathcal{E}^{k+1/2})^{\mathrm{T}}\big(C_x^{-1}C_y^{-1}\big)
        \mathcal{E}^{k+1/2}}\\
      &\le\sqrt{\lambda_{\max}(C_x^{-1}C_y^{-1})}\|\mathcal{E}^{k+1/2}\|
      =\frac{1}{\sqrt{\lambda_{\min}(C_xC_y)}}\|\mathcal{E}^{k+1/2}\|\\
      &=\frac{1}{\sqrt{\lambda_{\min}(C_\alpha)\lambda_{\min}(C_\beta)}}
      \|\mathcal{E}^{k+1/2}\|.
    \end{split}
  \end{equation}
  Summing up \ref{eq:4.20} for all $0\le k\le n-1$ shows that
  \begin{equation}\label{eq:4.21}
    E^n\le \tau \sum_{k=0}^{n-1}\|(C_x^{-1})^{1/2}(C_y^{-1})^{1/2}\mathcal{E}^{k+1/2}\|
    \le \frac{\tau}{\sqrt{\lambda_{\min}(C_\alpha)\lambda_{\min}(C_\beta)}}
      \sum_{k=0}^{n-1}\|\mathcal{E}^{k+1/2}\|.
  \end{equation}
Combining \ref{eq:4.21} and \ref{eq:4.22}, and noticing
$|\hat{\varepsilon}_{i,j}^{k+1/2}|\le \tilde{c}(\tau^2+h^3)$ for all
$1\le i, j \le N-1$, we obtain
  \begin{equation}
    \|e^n\| \leq \frac{c~T}{\lambda_{\min}(C_\alpha)\lambda_{\min}(C_\beta)}(\tau^2+h^3).
  \end{equation}
The estimate for scheme \ref{eq:4.10} can also be obtained by the
similar approach used above.
\end{proof}

\begin{remark}
If $\alpha,\beta=1,2$ with $(p,q)=(1,0)$ and $\alpha,\beta=2$ with
$(p,q)=(1,-1)$, then by the reasoning of the proof of Theorem
\ref{thm:7}, we obtain the following error estimate for the
difference scheme \ref{eq:4.7} and \ref{eq:4.10}
  \begin{equation}
    \|u^n-U^n\|\leq c(\tau^2+h^4), ~~~1\leq n\leq M.
  \end{equation}
\end{remark}
\section{Numerical Experiments}
In this section, the numerical results of one and two dimensional
cases are presented to show the effectiveness and convergence orders
of the schemes.

For saving computational time, we do one extrapolation to increase
the accuracy to the third order in time (see \cite{Marchuk:1983}).
The detailed extrapolation algorithm is described as follows.

  Step 1. Calculate $\zeta_1, \zeta_2$ from the following linear algebraic equations,
    \begin{equation*}
      \begin{cases}
        \zeta_1+\zeta_2=1,&  \\
        \zeta_1+{\displaystyle\frac{\zeta_2}{4}}=0;&
      \end{cases}
    \end{equation*}

  Step 2. Compute the solution $U^n$ of the compact difference schemes with two time stepsizes $\tau$ and $\tau/2$;

  Step 3. Evaluate the extrapolation solution $W^{n}(\tau)$ by
    \begin{equation*}
      W^n(\tau)=\zeta_1~U^{n}(\tau)+\zeta_2~U^{n}(\tau/2).
    \end{equation*}
\begin{example}
\label{Exam:1}
  Consider the following one dimensional space fractional diffusion equation
  \begin{equation}
  \label{eq:5.2}
    \begin{split}
      & \frac{\partial u(x,t)}{\partial t}={_0}D_x^{\alpha}u(x,t)+{_x}D_1^{\alpha}u(x,t)
        +f(x,t), \quad (x,t)\in (0,1)\times(0,1],\\
      & u(0,t)=u(1,t)=0, \quad t\in [0, 1],\\
      & u(x,0)=x^3(1-x)^3, \quad x\in [0, 1],
    \end{split}
  \end{equation}
  with the source term
  \begin{equation*}
    \begin{split}
    f(x,t)=-\mathrm{e}^{-t}\Big(x^3(1-x)^3+\frac{\Gamma(4)}{\Gamma(4-\alpha)}\big(x^{3-\alpha}+(1-x)^{3-\alpha}\big)
                    -3\frac{\Gamma(5)}{\Gamma(5-\alpha)}\big(x^{4-\alpha}+(1-x)^{4-\alpha}\big)\\
                    +3\frac{\Gamma(6)}{\Gamma(6-\alpha)}\big(x^{5-\alpha}+(1-x)^{5-\alpha}\big)
                     -\frac{\Gamma(7)}{\Gamma(7-\alpha)}\big(x^{6-\alpha}+(1-x)^{6-\alpha}\big)\Big).
    \end{split}
  \end{equation*}
And the exact solution of \ref{eq:5.2} is given by $u(x,t)=\mathrm{e}^{-t}x^3(1-x)^3$.
\end{example}

\begin{table}[h!]\fontsize{10pt}{12pt}\selectfont
  \begin{center}
  \caption{The maximum and $L^2$ errors and corresponding convergence rates to \ref{eq:5.2} approximated by the compact difference scheme at $t=1$ for different $\alpha$ with $\tau=h$. }\vspace{5pt}
  \begin{tabular*}{\linewidth}{@{\extracolsep{\fill}}*{2}{r}*{8}{c}}
    \toprule
     & & \multicolumn{4}{c}{$(p,q)=(1,0)$} & \multicolumn{4}{c}{$(p,q)=(1,-1)$} \\
    \cline{3-6} \cline{7-10} \\[-8pt]
    $\alpha$ & $N$ & $\|u^n-W^n\|_{\infty}$ & rate & $\|u^n-W^n\|$ & rate
    & $\|u^n-W^n\|_{\infty}$ & rate & $\|u^n-W^n\|$ & rate\\
    \toprule
    1.2 & 8 & 7.24249E-05 & - & 4.12739E-05 & - & 1.67141E-01 & - & 1.17292E-01 & - \\
    & 16 & 9.86726E-06 & 2.88  & 6.00551E-06 & 2.78 & 6.57473E-01 & -1.98  & 4.14981E-01 & -1.82 \\
    & 32 & 1.41964E-06 & 2.80  & 8.38665E-07 & 2.84 & 1.20582E+04 & -14.16  & 7.00900E+03 & -14.04 \\
    & 64 & 1.91577E-07 & 2.89  & 1.13698E-07 & 2.88 & 6.67837E+11 & -25.72  & 3.06749E+11 & -25.38 \\
    & 128 & 2.52254E-08 & 2.92  & 1.50548E-08 & 2.92 & 6.55394E+25 & -46.48  & 2.42775E+25 & -46.17 \\
    & 256 & 3.26935E-09 & 2.95  & 1.95986E-09 & 2.94 & 7.87651E+50 & -83.31  & 2.34722E+50 & -83.00 \\
    \midrule
    $\frac{1+\sqrt{73}}{6}$ & 8 & 5.48070E-05 & - & 3.18317E-05 & - & 2.66168E-04 & - & 1.62093E-04 & - \\
    & 16 & 6.71566E-06 & 3.03  & 4.15788E-06 & 2.94 & 4.38053E-05 & 2.60  & 2.50890E-05 & 2.69 \\
    & 32 & 8.92036E-07 & 2.91  & 5.69588E-07 & 2.87 & 6.07692E-06 & 2.85  & 3.69218E-06 & 2.76 \\
    & 64 & 1.19966E-07 & 2.89  & 7.81948E-08 & 2.86 & 8.00832E-07 & 2.92  & 5.16694E-07 & 2.84 \\
    & 128 & 1.60292E-08 & 2.90  & 1.06194E-08 & 2.88 & 1.05299E-07 & 2.93  & 7.00251E-08 & 2.88 \\
    & 256 & 2.11675E-09 & 2.92  & 1.42376E-09 & 2.90 & 1.37085E-08 & 2.94  & 9.30500E-09 & 2.91 \\
    \midrule
    1.8 & 8 & 3.78749E-05 & - & 2.87085E-05 & - & 1.49798E-04 & - & 8.54691E-05 & - \\
    & 16 & 3.90569E-06 & 3.28  & 2.91599E-06 & 3.30 & 1.94926E-05 & 2.94  & 1.23638E-05 & 2.79 \\
    & 32 & 4.51151E-07 & 3.11  & 3.47233E-07 & 3.07 & 2.60919E-06 & 2.90  & 1.76317E-06 & 2.81 \\
    & 64 & 5.86615E-08 & 2.94  & 4.44726E-08 & 2.96 & 3.31182E-07 & 2.98  & 2.42232E-07 & 2.86 \\
    & 128 & 7.68726E-09 & 2.93  & 5.85045E-09 & 2.93 & 4.26392E-08 & 2.96  & 3.24700E-08 & 2.90 \\
    & 256 & 1.00982E-09 & 2.93  & 7.74722E-10 & 2.92 & 5.51021E-09 & 2.95  & 4.28918E-09 & 2.92 \\
    \toprule
  \end{tabular*}\label{tab:1}
  \end{center}
\end{table}
In Table \ref{tab:1}, we present the errors $\|u^n-W^n\|,
\|u^n-W^n\|_\infty$ and corresponding convergence orders with
different space stepsizes, where
$W_{i}^n(\tau)=-\frac{1}{3}U_{i}^n(\tau)+\frac{4}{3}U_{i}^n(\tau/2)$
is the extrapolation solution, and $U_{i}^n$ satisfies the compact
scheme \ref{eq:3.7}. It can be noted that for the case of $(p,
q)=(1,-1)$, the numerical results are neither stable nor convergent
when the order $\alpha$ is less than the critical value
$\frac{1+\sqrt{73}}{6}(\thickapprox1.59)$, which coincides with the
theoretical results. Figure \ref{fig:a} shows that the convergence
rates of the maximum and $L^2$ errors to \ref{eq:5.2} approximated
by the compact difference scheme at $t=1$ with $N=128$ for different
$\alpha$, where the convergence rates fall from $4$ to $3$ near
$\alpha=1$ and increase gradually with $\alpha$ from $1.9$ to $2$.
\begin{figure}[h!]
    \subfigure[$(p,q)=(1,0)$]{
      \begin{minipage}[t]{0.5\linewidth}
        \centering
        \includegraphics[scale=0.5]{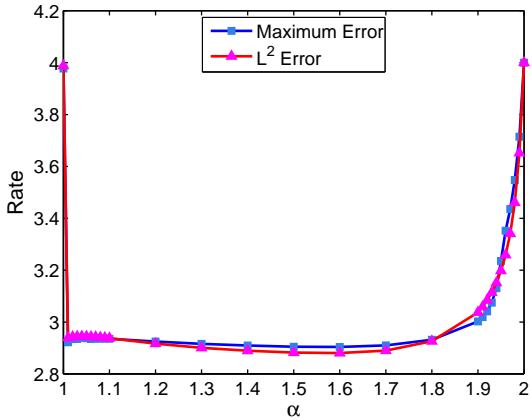}
      \end{minipage}}
    \subfigure[$(p,q)=(1,-1)$]{
      \begin{minipage}[t]{0.5\linewidth}
        \centering
        \includegraphics[scale=0.5]{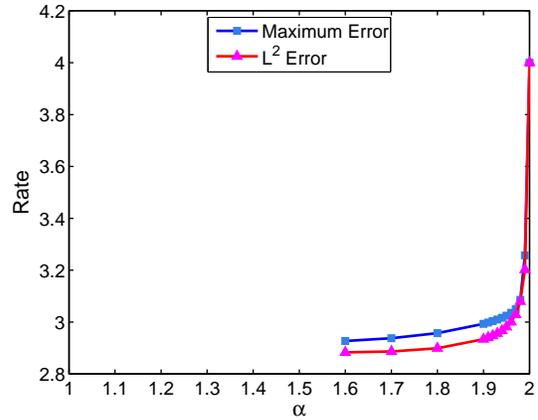}
      \end{minipage}}
    \caption{The convergence rates of the maximum and $L^2$ errors to \ref{eq:5.2} approximated by the compact difference scheme at $t=1$ with $N=128$.}\label{fig:a}
\end{figure}

\begin{example}\label{Exam:2}
The following fractional diffusion problem
  \begin{equation}\label{eq:5.3}
    \frac{\partial u(x,y,t)}{\partial t}={_0}D_x^{\alpha}u(x,y,t)+{_x}D_1^{\alpha}u(x,y,t)
        +{_0}D_y^{\beta}u(x,y,t)+{_y}D_{1}^{\beta}u(x,y,t)+f(x,y,t)
  \end{equation}
  is considered in the domain $\Omega=(0,1)^2$ and $t>0$ with the boundary conditions
  \begin{equation*}
    u(x,y,t)=0, \quad (x,y)\in \partial\Omega,\quad t\in [0, 1],
  \end{equation*}
  and initial value
  \begin{equation*}
    u(x,y,0)=u(x,y,0)=x^3(1-x)^3y^3(1-y)^3, \quad (x,y)\in [0, 1]^2.
  \end{equation*}
  The source term is
  \begin{equation*}
     \begin{split}
     f(x,y,t)=-\mathrm{e}^{-t}\Big[& x^3(1-x)^3y^3(1-y)^3\\
             &+\Big(\frac{\Gamma(4)}{\Gamma(4-\alpha)}\big(x^{3-\alpha}+(1-x)^{3-\alpha}\big)
              -\frac{3\Gamma(5)}{\Gamma(5-\alpha)}\big(x^{4-\alpha}+(1-x)^{4-\alpha}\big)\\
             &+\frac{3\Gamma(6)}{\Gamma(6-\alpha)}\big(x^{5-\alpha}+(1-x)^{5-\alpha}\big)
              -\frac{\Gamma(7)}{\Gamma(7-\alpha)}\big(x^{6-\alpha}+(1-x)^{6-\alpha}\big)\Big)y^3(1-y)^3\\
             &+\Big(\frac{\Gamma(4)}{\Gamma(4-\beta)}\big(y^{3-\beta}+(1-y)^{3-\beta}\big)
              -\frac{3\Gamma(5)}{\Gamma(5-\beta)}\big(y^{4-\beta}+(1-y)^{4-\beta}\big)\\
             &+\frac{3\Gamma(6)}{\Gamma(6-\beta)}\big(y^{5-\beta}+(1-y)^{5-\beta}\big)
              -\frac{\Gamma(7)}{\Gamma(7-\beta)}\big(y^{6-\beta}+(1-y)^{6-\beta}\big)\Big)x^3(1-x)^3\Big].
    \end{split}
  \end{equation*}
  And the exact solution of \ref{eq:5.2} is given by $u(x,t)=\mathrm{e}^{-t}x^3(1-x)^3y^3(1-y)^3$.
\end{example}
\begin{table}[h!]\fontsize{10pt}{12pt}\selectfont
  \begin{center}
  \caption{The maximum and $L^2$ errors and corresponding convergence rates to \ref{eq:5.2} approximated by the compact difference splitting schemes at $t=1$ with $\tau=h$.}\vspace{5pt}
  \begin{tabular*}{\linewidth}{@{\extracolsep{\fill}}*{1}{l}*{1}{r}*{8}{c}}
    \toprule
     & & \multicolumn{4}{c}{$(p,q)=(1,0), (\alpha, \beta)=(1.1, 1.7)$} & \multicolumn{4}{c}{$(p,q)=(1,-1),(\alpha, \beta)=(1.6, 1.9)$} \\
    \cline{3-6} \cline{7-10} \\[-8pt]
    Scheme & $N$ & $\|u^n-W^n\|_{\infty}$ & rate & $\|u^n-W^n\|$ & rate & $\|u^n-W^n\|_{\infty}$ & rate & $\|u^n-W^n\|$ & rate\\
    \toprule
    & 8 & 5.18337E-07 & - & 2.35962E-07 & - & 1.27051E-05 & - & 5.20284E-06 & - \\
    & 16 & 7.47852E-08 & 2.79  & 3.12736E-08 & 2.92 & 8.45409E-07 & 3.91  & 3.09460E-07 & 4.07 \\
    & 32 & 9.61857E-09 & 2.96  & 4.13976E-09 & 2.92 & 8.71092E-08 & 3.28  & 2.87346E-008 & 3.43 \\
    CLOD & 64 & 1.28508E-09 & 2.90  & 5.56504E-10 & 2.90 & 9.67026E-09 & 3.17  & 3.35453E-009 & 3.10 \\
    & 128 & 1.71211E-10 & 2.91  & 7.47594E-11 & 2.90 & 1.17863E-09 & 3.04  & 4.22753E-010 & 2.99 \\
    & 256 & 2.26863E-11 & 2.92  & 9.97266E-12 & 2.91 & 1.49079E-10 & 2.98  & 5.43715E-011 & 2.96 \\
     \midrule
    & 8 & 6.30467E-007 & - & 2.63036E-007 & - & 3.59941E-006 & - & 1.33801E-006 & - \\
    & 16 & 7.67884E-008 & 3.04  & 3.20134E-008 & 3.04 & 5.12851E-007 & 2.81  & 1.88287E-007 & 2.83 \\
    & 32 & 9.54417E-009 & 3.01  & 4.19273E-009 & 2.93 & 7.24724E-008 & 2.82  & 2.48958E-008 & 2.92 \\
    CPR& 64 & 1.28241E-009 & 2.90  & 5.60865E-010 & 2.90 & 9.12654E-009 & 2.99  & 3.25063E-009 & 2.94 \\
    & 128 & 1.71042E-010 & 2.91  & 7.51181E-011 & 2.90 & 1.15503E-009 & 2.98  & 4.22913E-010 & 2.94 \\
    & 256 & 2.26759E-011 & 2.92  & 1.00029E-011 & 2.91 & 1.49068E-010 & 2.95  & 5.48530E-011 & 2.95 \\
    \midrule
    & 8 & 6.28561E-007 & - & 2.54619E-007 & - & 3.01132E-006 & - & 1.15694E-006 & - \\
    & 16 & 7.67367E-008 & 3.03  & 3.12227E-008 & 3.03 & 4.37299E-007 & 2.78  & 1.67892E-007 & 2.78 \\
    & 32 & 9.54020E-009 & 3.01  & 4.12678E-009 & 2.92 & 6.67372E-008 & 2.71  & 2.30759E-008 & 2.86 \\
    CDouglas & 64 & 1.28220E-009 & 2.90  & 5.55710E-010 & 2.89 & 8.61113E-009 & 2.95  & 3.09798E-009 & 2.90 \\
    & 128 & 1.71028E-010 & 2.91  & 7.47144E-011 & 2.89 & 1.11175E-009 & 2.95  & 4.09795E-010 & 2.92 \\
    & 256 & 2.26748E-011 & 2.92  & 9.97005E-012 & 2.91 & 1.45187E-010 & 2.94  & 5.36563E-011 & 2.93 \\
    \midrule
   & 8 & 6.28561E-007 & - & 2.54619E-007 & - & 3.01132E-006 & - & 1.15694E-006 & - \\
    & 16 & 7.67367E-008 & 3.03  & 3.12227E-008 & 3.03 & 4.37299E-007 & 2.78  & 1.67892E-007 & 2.78 \\
    & 32 & 9.54020E-009 & 3.01  & 4.12678E-009 & 2.92 & 6.67372E-008 & 2.71  & 2.30759E-008 & 2.86 \\
   CD'yakonov & 64 & 1.28220E-009 & 2.90  & 5.55710E-010 & 2.89 & 8.61113E-009 & 2.95  & 3.09798E-009 & 2.90 \\
    & 128 & 1.71028E-010 & 2.91  & 7.47144E-011 & 2.89 & 1.11175E-009 & 2.95  & 4.09795E-010 & 2.92 \\
    & 256 & 2.26748E-011 & 2.92  & 9.97005E-012 & 2.91 & 1.45187E-010 & 2.94  & 5.36563E-011 & 2.93 \\
  \toprule
  \end{tabular*}\label{tab:2}
  \end{center}
\end{table}
In Table \ref{tab:2}, the errors $\|u^n-W^{n}\|,
\|u^n-W^{n}\|_\infty$ and their respective convergence rates are
presented for different uniformly space stepsizes, where
$W_{i,j}^{n}(\tau)=-\frac{1}{3}U_{i,j}^n(\tau)+\frac{4}{3}U_{i,j}^n(\tau/2)$
is the numerical solution by extrapolation in time, and $U_{i,j}^n$
satisfies the compact LOD scheme \ref{eq:4.15}, compact
Peaceman-Richardson scheme \ref{eq:4.16}, compact Douglas scheme
\ref{eq:4.17} and compact D'yakonov scheme \ref{eq:4.18},
respectively. The third order accuracy both in time and space is
verified, and in the computational process, the time costs are
largely reduced.
\section{Conclusion}
In \cite{Tian:12}, we introduce the weighted and shifted
Gr\"{u}nwald difference (WSGD) operators and show that the WSGD
operators have second order accuracy to approximate the fractional
derivatives. This paper is the sequel of \cite{Tian:12}. Based on
the WSGD operators, we further introduce the compact WSGD operators
(CWSGD) which have third order accuracy. Then we use the CWSGD
operators to establish the compact difference schemes for one and
two dimensional space fractional diffusion equations. And the
theoretical analysis of the stability and convergence of the schemes
is presented. The numerical results illustrate the effectiveness of
the compact difference approximation for the fractional problems and
confirm the convergence orders of the schemes.
\section{Acknowledgements}
The authors thank Prof Yujiang Wu for his constant encouragement and
support. This work was supported by the Program for New Century
Excellent Talents in University under Grant No. NCET-09-0438, the
National Natural Science Foundation of China under Grant No.
10801067, and the Fundamental Research Funds for the Central
Universities under Grant No. lzujbky-2010-63 and No.
lzujbky-2012-k26.

\addcontentsline{toc}{section}{References}

\end{document}